\documentclass[11pt,leqno]{article}
\usepackage[english,activeacute]{babel}
\usepackage{epsfig}
\usepackage[latin1]{inputenc}
\usepackage[T1]{fontenc}
\usepackage{amsmath,amsfonts,amssymb,mathrsfs,amsthm}
\usepackage{graphicx}
\usepackage[mathscr]{eucal}
\usepackage{makeidx}
\usepackage{verbatim}
\usepackage{graphics,graphicx}
\usepackage{textcomp}

\usepackage{textcomp}

\usepackage{pstricks}
\usepackage{graphicx}
\usepackage{graphpap}

\usepackage{ifpdf}
\ifpdf
\usepackage{graphicx}  \DeclareGraphicsExtensions{.pdf,.mps}

\else
\usepackage{amsfonts}
\usepackage{graphicx}  \DeclareGraphicsExtensions{.ps,.eps}

\fi

\def\pasdegrille{\let\grille = \pasgrille}
\def\ecriture#1#2{\setbox1=\hbox{#1}
\dimen1= \wd1
\dimen2=\ht1
\dimen3=\dp1
\grille #2 \box1 }
\def\aat#1#2#3{
\divide \dimen1 by 48
\dimen3=\dimen1
\multiply \dimen1 by #1
\advance \dimen1 by -\dimen3
\divide \dimen1 by 101
\multiply \dimen1 by 100
\divide \dimen2 by \count11
\multiply \dimen2 by #2 
\setbox0=\hbox{#3}\ht0=0pt\dp0=0pt
  \rlap{\kern\dimen1 \vbox to0pt{\kern-\dimen2\box0\vss}}\dimen1= \wd1
\dimen2=\ht1}
\def\pasgrille{
\count12= \dimen1 
\divide \count12 by 50
\divide \dimen2 by \count12
\count11 =\dimen2
\ 
\divide \dimen1 by 48
\setlength{\unitlength}{\dimen1}
\smash{\rlap{\ }}
\dimen1= \wd1
\dimen2=\ht1
}
\def\grille{
\count12= \dimen1 
\divide \count12 by 50
\divide \dimen2 by \count12
\count11 =\dimen2
\ 
\divide \dimen1 by 48
\setlength{\unitlength}{\dimen1}
\smash{\rlap{\graphpaper[1](0,0)(50, \count11)}}
\dimen1= \wd1
\dimen2=\ht1
}


\addtolength{\oddsidemargin}{-.8in}
	\addtolength{\evensidemargin}{-.8in}
	\addtolength{\textwidth}{1.6in}

	\addtolength{\topmargin}{-.8in}
	\addtolength{\textheight}{1.6in}

\def\e{{\varepsilon}}

\newcommand\R{\mathbb{R}}

\newcommand\N{\mathbb{N}}
\newcommand\Z{\mathbb{Z}}

\newcommand\bna{\begin{eqnarray*}}
\newcommand\ena{\end{eqnarray*}}

\newcommand\bnan{\begin{eqnarray}}
\newcommand\enan{\end{eqnarray}}

\newcommand\bnp{\begin{proof}}
\newcommand\enp{\end{proof}}

\newcommand\bneq{\begin{eqnarray*}\left\lbrace \begin{array}{rcl}}
\newcommand\eneq{\end{array} \right.\end{eqnarray*}}
\newcommand\bneqn{\begin{eqnarray}\left\lbrace \begin{array}{rcl}}
\newcommand\eneqn{\end{array} \right.\end{eqnarray}}


\newcommand\intT{\int_0^T}


\newcommand\nor[2]{\left\|#1\right\|_{#2}}
\newcommand\norL[1]{\left\|#1\right\|_{L^2}}
\newcommand\sgn{\textnormal{sgn}}

\newcommand\Tu{\mathbb{T}^1}


\newcommand{\pc}{ \usefont{T1}{cmtl}{m}{n} \selectfont}

\def\pasdegrille{\let\grille=\pasgrille}
\def\ecriture#1#2{\setbox1=\hbox{#1}
\dimen1= \wd1
\dimen2=\ht1
\dimen3=\dp1
\grille #2 \box1 }
\def\aat#1#2#3{
\divide \dimen1 by 48
\dimen3=\dimen1
\multiply \dimen1 by #1
\advance \dimen1 by -\dimen3
\divide \dimen1 by 101
\multiply \dimen1 by 100
\divide \dimen2 by \count11
\multiply \dimen2 by #2 
\setbox0=\hbox{#3}\ht0=0pt\dp0=0pt
  \rlap{\kern\dimen1 \vbox to0pt{\kern-\dimen2\box0\vss}}\dimen1= \wd1
\dimen2=\ht1}
\def\pasgrille{
\count12= \dimen1 
\divide \count12 by 50
\divide \dimen2 by \count12
\count11 =\dimen2
\ 
\divide \dimen1 by 48
\setlength{\unitlength}{\dimen1}
\smash{\rlap{\ }}
\dimen1= \wd1
\dimen2=\ht1
}
\pasdegrille

%
%

\newtheorem{theorem}{Theorem}[section]
\newtheorem{remark}{Remark}[section]
\newtheorem{lemme}{Lemma}[section]

\newtheorem{hypo}{Assumption}[section]


\date{}

\title{Internal control of the Schrödinger equation}
\author{Camille \textsc{Laurent}\footnote{CNRS, UMR 7598, Laboratoire Jacques-Louis
Lions, F-75005, Paris, France  }
\footnote{UPMC Univ Paris 06, UMR 7598, Laboratoire Jacques-Louis Lions,
F-75005, Paris, France, email: { \pc laurent@ann.jussieu.fr }}}

\begin{document}

\maketitle
\begin{abstract}
In this paper, we intend to present some already known results about the internal controllability of the linear and nonlinear Schrödinger equation.  

After presenting the basic properties of the equation, we give a self contained proof of the controllability in dimension $1$ using some propagation results. We then discuss how to obtain some similar results on a compact manifold where the zone of control satisfies the Geometric Control Condition. We also discuss some known results and open questions when this condition is not satisfied.  
Then, we present the links between the controllability and some resolvent estimates. Finally, we discuss the new difficulties when we consider the Nonlinear Schrödinger equation. 
\end{abstract}
{\bf Key words.} Controllability, Linear Schrödinger equation, Nonlinear Schr\"odinger equation\\
\vspace{0.2 cm}{\bf AMS subject classifications.} 93B05, 35Q41, 35Q55

\section{Introduction}
In the control of PDE, the aim is to bring the solution from an initial state to a final state fixed in advance, with a control term which could be, for instance, a source (distributed or internal control), boundary (boundary control) or potential (bilinear control) term, see Lions \cite{Lionscontrolbook} or Coron \cite{Coronlivre} for a general introduction.  

In this paper, is presented some existing results about the internal controllability of the linear and nonlinear Schrödinger equation
\bna
i\partial_t u+\Delta u=f(u)+1_{\omega}(x)g\quad, \quad (t,x)\in [0,T]\times M
\ena  
where $M$ is an open set or a manifold, $g$ is the control and $\omega\subset M$ is an open set. 

The main question will actually be the following: what are the conditions on $\omega$ and $T$ that allow to get the controllability? We expect $\omega$ and $T$ to be the smallest possible. 

It appears that the problem of control will be strongly linked to the propagation of the energy of the solutions and will be therefore strongly linked to the geometry of $\omega$ with respect to $M$. More precisely, a crucial interest will be made to the following condition, called Geometric Control Condition
\begin{center}
Any (generalized) geodesic, meets $\omega$ in a time $t\leq T_0$.
\end{center}
In the presence of boundary (which mainly, will not be considered in these notes), for Dirichlet condition for instance, the generalized geodesics are considered bouncing on the boundary following the laws of geometric optics. This assumption was first considered for the wave equation by Rauch and Taylor for a manifold  \cite{RauchTaylordecay} and by Bardos, Lebeau and Rauch \cite{BLR} for bounded open set and then proved to be sufficient for the Schrödinger equation by Lebeau \cite{control-lin1}. Lebeau \cite{control-lin1} deals with boundary control, but the same ideas lead to the same result for internal control. The present article intends to give an almost complete proof in the more simple case without boundary, in the spirit of Dehman-Gérard-Lebeau \cite{control-nl}. We will also discuss the link with resolvent estimates and the problems posed for the semilinear equation.

\medskip

In the first Section, we will present the main properties of the linear Schrödinger equation and express the controllability in terms of an observability estimate for the free equation, as usual in the HUM method.

Then, in the second Section, we present some results of propagation for some solutions of the Schrödinger equation. These results express the fact that the solutions propagate with infinite speed according to the geodesics of $M$. The two types of information that we are able to propagate are the compactness and the regularity. The final result asserts that a sequence of solutions which is compact (resp. smooth) on an open set $\omega$ satisfying the Geometric Control Condition is compact (resp. smooth) everywhere. We will first present an elementary proof in the one dimensional case and then give the microlocal tools that are necessary to understand the higher dimensions in the boundaryless case.

In Section \ref{sectioncontrol0}, we will show how the previous propagation results allow to prove the controllability under the Geometric Control Condition on a compact manifold. 
We also prove that the HUM control is as smooth as the initial data.

It turns out that the Geometric Control Condition is sufficient but not necessary for the controllability of the linear Schrödinger equation. The necesary and sufficient condition is a widely open problem. We give some known results in that direction. 

In Section \ref{sectionresolv}, we make the link between the controllability (which is equivalent to observability) and some resolvent estimates for the static Laplace operator: 
\bna
\forall \lambda\in \R, \forall u\in D(-\Delta), \quad \norL{u}^2\leq M\norL{(\Delta-\lambda)u}^2+m\norL{1_{\omega}u}^2 
\ena
This point of view not only is interesting for giving alternative proof of the observability but also turns out to be very useful for the link with observablity of the discretized operator, to make the link with other equations (as the wave equation)...

In Section \ref{sectionsemilin}, we present the new problems for the nonlinear equation. We give a sketch of the usual proof of stabilisation, without refering to the functional spaces necessary in that problems.

The Appendix presents some technical steps used in the proofs.
\begin{remark}
This paper represents the notes of a course about the control of the Schrödinger equation given by the author at the summer school PASI-CIPPDE in Santiago de Chile. It was mainly intended to students or young researcher, not specialists of the subject. Therefore, it does not pretend to be a general survey of the latest results in the field, but rather an accessible introduction to the subject. The result presented are mainly not from the author. Moreover, we have sometimes sacrified the optimality or the generality of the results to make the presentation more elementary. In particular, we only deal with manifolds without boundaries, even if most of the results presented here remain true for a manifold with boundary and for control from the boundary.
\end{remark}

\section{The linear equation}
We deal with the internal controllability of the linear Schrödinger equation. If $M$ is a compact manifold or a domain of $\R^d$ and $\omega \subset M$ an open set. The problem is, given $u_0$ and $u_1$ functions on $M$ (in a certain functional space), can we find a control $g$ supported in $[0,T]\times \omega$, such that the solution of 
\bneqn
\label{controleq}
i\partial_t u+\Delta u&=&1_{\omega}g\\
u(0)&=&u_0
\eneqn 
satisfies $u(T)=u_1$?

The problems can be, for instance, formulated for $u_0$, $u_1 \in L^2(M)$ with a control in $L^1([0,T],L^2(M))$ where the Cauchy problem is well posed thanks to semi-group theory, with additional boundary conditions if necessary. 
By linearity and backward wellposedness, the problem can easily be reduced to the case $u_1=0$. Moreover, by duality, the HUM method, which will be described in Section \ref{sectioncontrol0} (see also Lions \cite{Lionscontrolbook} or Tucsnak-Weiss \cite{TucsnakWeiss} for more abstract framework) gives that the controllability in $L^2$ is equivalent to the observability estimate
\bnan
\label{inegobservgener}
\nor{v_0}{L^2}^2 \leq C_T\int_{[0,T]}\nor{1_{\omega}(x)v}{L^2(M)} ^2~dt
\enan
for $v$ solution of the free equation
\bneqn
\label{freeeq}
i\partial_t v+\Delta v&=&0\\
v(0)&=&v_0.
\eneqn
The argument is actually close to the fact that an operator $A$ is onto if its dual satisfies $\nor{x}{}\leq C\nor{A^{*}x}{}$.

Note that the observability is also a property interesting for itself since it quantifies the problem of finding a complete solution from the knowledge of its evolution on a subdomain $\omega$. If the observability holds, there is even an explicit algorithm that allows to find the initial data from the observation (see for instance Ito-Ramdani-Tucsnak \cite{AlgoTimeRevers} using time reversal algorithm). Therefore, the controllability will be closely linked to the study of the propagation of the information for solution of the Schrödinger equation. 

First, the Schrödinger equation is meant to describe the evolution of a quantum particle and the physical heuristic is the following: 

A particle with small pulsation $h$, located in phase space close to a point $x_0$ and a direction $\xi_0$ (take for instance a wave packet $u_0=\frac{1}{h^{d/4}}e^{-\frac{|x-x_0|^2}{2h}}e^{i\frac{x\cdot \xi_0}{h}}$) will travel at speed $1/h$ according to the geodesic (or straight line in the flat case) starting at $x_0$ in direction $\xi_0$, eventually bouncing on the possible boundary. It is clear that this heuristic is not completely true since there are some limitations for the localization in phase space due to the Uncertainty principle. Moreover, the propagation is only true for short times after which some dispersion occurs (of the order of $h$ or better the Ehrenfest time $h |\log(h)|$, see the survey of Anantharaman-Mac\'ia \cite{AnanthaMaciaSurvey} for further comments and references). But this propagation at "infinite" speed gives the idea that the global geometry will be very important.   

\medskip

Note that using the splitting $\partial_t^2 +\Delta^2= (-i\partial_t +\Delta)(i\partial_t+\Delta)$, some controllability results for the Schrödinger equation can easily be transfered to the case of the plate equation (see Lebeau \cite{control-lin1}).

The main purpose of this notes is to give some geometric conditions on $\omega$ that ensure the observability and therefore the controllability. Then, another interesting problem is to find the best $\omega$ (with some appropriate constaints) for which the cost of control is minimal. This problem was investigated in Privat-Trélat-Zuazua \cite{OptimaObservPTZ} (see also the references therein).

\medskip

\textbf{Notation:}

\medskip
The one dimensional torus will be denoted by $\Tu=\R/\Z$ and $L^2(\Tu)$, often denoted by $L^2$, is the space of $L^2$ periodic functions. $M$ will denote a compact manifold without boundary with a Riemannian metric.

The infinitesimal generator of $i\partial_x^2$ (resp. $i\Delta$ where $\Delta$ is the Laplace-Beltrami operator $M$) is denoted by $e^{it\partial_x^2}$ (resp. $e^{it\Delta}$). Hence $u=e^{it\partial_x^2}u_0$ is the solution of 
\bneq
i\partial_t u+\partial_x^2 u&=&	0\\
u(0)&=&u_0.
\eneq
We have $\nor{u(t)}{H^s}=\nor{u_0}{H^s}$ for every $s\in \R$ and $t\in \R$.

Denote $D^r$ the operator defined on $\mathcal{D'}(\Tu)$ by
\bnan
\begin{array}{rclc}
\label{Dr}
\widehat{D^r u}(n)&=& \sgn(n)|n|^r \widehat{u}(n) & \quad \textnormal{if}\quad n\in \Z^*\\
&=& \widehat{u}(0) & \quad \textnormal{if} \quad n= 0.
\end{array}
\enan
where $\widehat{u}$ is the Fourier transform or $u$.

\section{Propagation of compactness}

\subsection{In 1 Dimension}
In this section, we give an elementary proof of some theorems of propagation of compactness in the case of dimension $1$. In the next subsection, will be described the original and more involved proof from Dehman-Gerard-Lebeau \cite{control-nl} in higer dimension using microlocal analysis. The one dimensional version has also been considered in \cite{LaurentNLSdim1} with some Bourgain spaces in the context of nonlinear controllability and stabilization.  

\begin{theorem}
\label{thmpropagation3}
Let $u_n$ be a sequence of smooth solutions of
$$i\partial_t u_n + \partial_x^2 u_n =f_n$$  
with
$$ \left\|u_n\right\|_{L^2([0,T],L^2(\Tu) )} \leq C,~~\left\|u_n\right\|_{L^2([0,T],H^{-1}(\Tu))} \rightarrow 0~~ and~~\left\|f_n\right\|_{L^2([0,T],H^{-1}(\Tu))}\rightarrow  0. $$ 
Moreover, we assume that there is a non empty open set $\omega\subset \Tu$ such that $u_n \rightarrow 0$ in $L^2([0,T],L^2(\omega))$.\\
Then $u_n \rightarrow 0$ in $L^2_{loc}([0,T],L^2(\Tu))$.
\end{theorem}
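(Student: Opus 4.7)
The strategy I would take is a commutator argument, exploiting the combination of the hypotheses $\|u_n\|_{L^2([0,T],H^{-1}(\Tu))}\to 0$ and $\|f_n\|_{L^2([0,T],H^{-1}(\Tu))}\to 0$ to absorb error terms when pairing with a pseudodifferential operator of order $-1$. For an arbitrary $a\in C^\infty(\Tu)$, I would introduce
\begin{equation*}
B=\tfrac12\bigl(aD^{-1}+D^{-1}a\bigr),
\end{equation*}
where $D^{-1}$ is the order $-1$ Fourier multiplier defined above. Then $B$ is self-adjoint and of order $-1$, mapping $L^2\to H^1$ and $H^{-1}\to L^2$ boundedly. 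Since $[\partial_x^2,D^{-1}]=0$ and $\partial_x D^{-1}$ acts as multiplication by $i$ on non-zero Fourier modes, a short computation of $[\partial_x^2,B]$ yields
\begin{equation*}
\tfrac{1}{i}[\partial_x^2,B]=2a'(x)+R,
\end{equation*}
with $R$ a remainder of order $-1$.

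From the equation $\partial_tu_n=i\partial_x^2u_n-if_n$ and $B=B^*$, one obtains the identity
\begin{equation*}
\frac{d}{dt}\langle Bu_n,u_n\rangle=\bigl\langle\tfrac{1}{i}[\partial_x^2,B]u_n,u_n\bigr\rangle-2\operatorname{Im}\langle Bu_n,f_n\rangle.
\end{equation*}
Multiplying by $\chi\in C_c^\infty((0,T))$ and integrating by parts in $t$ (no boundary terms) gives
\begin{equation*}
2\int_0^T\chi(t)\int_{\Tu}a'(x)|u_n|^2\,dx\,dt=-\int_0^T\chi'\langle Bu_n,u_n\rangle\,dt-\int_0^T\chi\langle Ru_n,u_n\rangle\,dt+2\int_0^T\chi\,\operatorname{Im}\langle Bu_n,f_n\rangle\,dt.
\end{equation*}
Each right-hand side term is estimated by $C\|u_n\|_{L^2([0,T],L^2(\Tu))}$ (bounded by hypothesis) times one of $\|u_n\|_{L^2([0,T],H^{-1}(\Tu))}$ or $\|f_n\|_{L^2([0,T],H^{-1}(\Tu))}$, using the $H^1/H^{-1}$ duality together with the mapping properties of $B$ and $R$. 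All three tend to zero, so
\begin{equation*}
\int_0^T\chi(t)\int_{\Tu}a'(x)|u_n(t,x)|^2\,dx\,dt\longrightarrow 0
\end{equation*}
for every $a\in C^\infty(\Tu)$ and every $\chi\in C_c^\infty((0,T))$.

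The set $\{a':a\in C^\infty(\Tu)\}$ coincides with the smooth mean-zero functions on $\Tu$, so the previous display forces any weak-$*$ subsequential limit of $|u_n|^2\,dx$ in $\mathcal D'((0,T)\times\Tu)$ to be constant in $x$. To conclude, pick $\phi_0\in C^\infty(\Tu)$ with $\phi_0\ge 0$, $\operatorname{supp}\phi_0\subset\omega$, $\phi_0\not\equiv 0$, and set $c=\int_{\Tu}\phi_0>0$. Applying the previous limit to the mean-zero function $\phi_0-c/|\Tu|$ and noting that $\int_0^T\chi\int_{\Tu}\phi_0|u_n|^2\,dx\,dt\le\|\phi_0\|_\infty\|\chi\|_\infty\|u_n\|_{L^2([0,T],L^2(\omega))}^2\to 0$ by hypothesis, one obtains $\int_0^T\chi(t)\int_{\Tu}|u_n|^2\,dx\,dt\to 0$ for every $\chi\in C_c^\infty((0,T))$, i.e.\ $u_n\to 0$ in $L^2_{\mathrm{loc}}([0,T],L^2(\Tu))$.

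The main technical obstacle is the design of $B$: one needs an operator of order $-1$ whose commutator with $\partial_x^2$ reproduces multiplication by any prescribed smooth $a'(x)$, modulo lower-order operators. The order $-1$ is essential, since it converts the weak convergence $u_n\to 0$ in $H^{-1}$ into $L^2$-smallness of $Bu_n$ and $Ru_n$, which is what closes the error estimates. In higher dimension one replaces this ad hoc choice by the microlocal defect measure machinery of Dehman--G\'erard--Lebeau; here, one exploits instead the very simple structure of the one-dimensional cotangent bundle, where an arbitrary smooth multiplier $a'(x)$ already separates points effectively once the observation on $\omega$ is combined with the mean-zero constraint.
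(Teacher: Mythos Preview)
Your proof is correct and follows essentially the same commutator strategy as the paper: pair the equation against an order $-1$ operator built from $D^{-1}$ and a multiplication, identify the principal part of the commutator with $\partial_x^2$ as multiplication by $2a'(x)$, and absorb all remaining terms using the $L^2L^2$ bound against the $L^2H^{-1}$ smallness of $u_n$ and $f_n$. The only differences are cosmetic: you symmetrize $B=\tfrac12(aD^{-1}+D^{-1}a)$ where the paper takes $B=\varphi D^{-1}$, and in the final step you subtract the mean (testing against $\phi_0-\int\phi_0$) to conclude directly, whereas the paper tests against a translate $f(x)-f(x-x_0)$ and closes with a partition of unity. Your endgame is slightly more economical, but the substance is identical.
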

\begin{proof} 
Let us consider the real valued functions $\varphi \in C^{\infty}(\Tu)$ and $\Psi \in  C^{\infty}_0(]0,T[)$, which will be chosen later. Set $Bu=\varphi(x)D^{-1}$ and $A=\Psi(t)B$ where $D^{-1}$ is the operator defined in (\ref{Dr}). We have $A^*=\Psi(t)D^{-1}\varphi(x)$.\\
Denote $L$ the Schrödinger operator $L  = i\partial_t  + \partial_x^2 $. We write by a classical way
\begin{eqnarray*}
\alpha_{n}&=& (L u_n,A^*u_n)_{L^2(]0,T[\times \Tu)}-(Au_n,Lu_n,)_{L^2(]0,T[\times \Tu)}\\
&=& ([A,\partial_x^2]u_n,u_n)_{L^2(]0,T[\times \Tu)}-i(\Psi'(t)Bu_n,u_n)_{L^2(]0,T[\times \Tu)}.
\end{eqnarray*}
We have also
\begin{eqnarray*}
\alpha_{n}&=& (f_n,A^*u_n)_{L^2(]0,T[\times \Tu)}-(Au_n,f_n)_{L^2(]0,T[\times \Tu)}.
\end{eqnarray*}
We obtain
\begin{eqnarray}
\label{estimfn}
\left|(f_n,A^*u_n)_{L^2(]0,T[\times \Tu)}\right| &\leq& \|f_n\|_{L^2([0,T],H^{-1})} \|A^*u_n\|_{L^2([0,T],H^{1})} \nonumber\\
&\leq& \|f_n\|_{L^2([0,T],H^{-1})} \|u_n\|_{L^2([0,T],L^2)}.
\end{eqnarray}
Then, $\left|(f_n,A^*u_n)_{L^2(]0,T[\times \Tu)}\right|\rightarrow 0$ when $n \rightarrow \infty$. The same estimate holds for the other term and gives $ \alpha_{n} \rightarrow 0$. Similarly, the term $ (\Psi'(t)Bu_n,u_n)_{L^2(]0,T[\times \Tu)}$ converges to zero.\\
Finally, we get
  $$([A,\partial_x^2]u_n,u_n)_{L^2(]0,T[\times \Tu)} \rightarrow 0 \textnormal{ when } n \rightarrow \infty. $$
Since $D^{-1}$ commutes with $\partial_x^2$, we have
\bna
[A,\partial_x^2]=-2\Psi(t)(\partial_x \varphi) \partial_x D^{-1}- \Psi(t)(\partial^2_x \varphi)D^{-1}.
\ena
Making the same estimates as in (\ref{estimfn}), we get  $$(\Psi(t)(\partial^2_x \varphi)D^{-1}u_n,u_n)_{L^2(]0,T[\times \Tu)} \rightarrow 0.$$ 
Moreover, $-i \partial_x D^{-1}$ is actually the orthogonal projection on the subspace of functions with $\widehat{u}(0)=0$. Using weak convergence, we easily obtain that $\widehat{u_n}(0)(t)$ tends to $0$ in $L^2([0,T])$ and indeed, $$(\Psi(t)(\partial_x \varphi)\widehat{u_n}(0)(t),u_n)_{L^2(]0,T[\times \Tu)} \rightarrow 0.$$
The final result is that for any $\varphi\in C^{\infty}(\Tu)$ and $\Psi \in  C^{\infty}_0(]0,T[)$
$$(\Psi(t)(\partial_x \varphi)u_n,u_n)_{L^2(]0,T[\times \Tu)} \rightarrow 0.$$
Notice that the functions which can be written $\partial_x \varphi$ are indeed all the functions $\psi$ that fulfill $\int_{\Tu} \psi =0$. For example, take any $f \in C^{\infty}_0(\omega)$ and any $x_0\in \Tu$, then $\psi(x)= f(x)-f(x-x_0)$ can be written by $\psi=\partial_x \varphi$. The strong convergence in $L^2([0,T],L^2(\omega))$ implies  
 $$(\Psi(t)f u_n,u_n)_{L^2(]0,T[\times \Tu)}\rightarrow 0.$$
Then for any $x_0\in \Tu$
$$(\Psi(t)f(.-x_0) u_n,u_n)_{L^2(]0,T[\times \Tu)}\rightarrow 0.$$
We close the proof by constructing a partition of the unity of $\Tu$ with functions with support smaller than $\omega$. \end{proof}
\begin{remark}
The previous theorem allows a source term $f_n$ bounded in a lower order Sobolev norm (only $L^2H^{-1}$ while $u_n$ is bounded in $L^2L^2$). This fact can be extremely useful in a nonlinear context, where the source term comes from the nonlinearity.
\end{remark}
A closely related result that can be useful in other situations is the propagation of the regularity. For some solution of the Schrödinger equation with source term, we recover some regularity information from an open set $\omega$ to the whole circle.
We write Proposition 13 of \cite{control-nl} in the one dimensional setting, which can be obtained with a proof very similar to Theorem \ref{thmpropagation3}.
\begin{theorem}
\label{thmpropagreg}
Let $T>0$ and $u\in L^2([0,T],H^r(\Tu))$, $r \in \R$, solution of
$$i \partial_t u +\partial_x^2 u=f \in L^2([0,T],H^r) $$
Additionally, we assume that there exist an open set $\omega$ and $\rho \leq \frac{1}{2}$ such that  \mbox{$u\in L^2_{loc}(]0,T[,H^{r+\rho}(\omega))$}.\\
Then, we have $u\in L^2_{loc}(]0,T[,H^{r+\rho}(\Tu))$.
\end{theorem}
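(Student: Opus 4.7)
The plan is to imitate the commutator argument of Theorem \ref{thmpropagation3}, replacing the Fourier multiplier $D^{-1}$ by one of order $2\rho-1$, so that the leading commutator with $\partial_x^2$ produces exactly the Sobolev norm $H^{r+\rho}$ we want to control. Choose real valued $\Psi\in C^{\infty}_{0}(]0,T[)$ and $\varphi\in C^{\infty}(\Tu)$ to be fixed later, set $B=\varphi(x)D^{2\rho-1}$ and $A=\Psi(t)B$, and write the standard commutator identity
\begin{equation*}
([A,\partial_x^2]u,u)_{L^2(]0,T[\times\Tu)}-i(\Psi'(t)Bu,u)_{L^2(]0,T[\times\Tu)}=(f,A^*u)_{L^2}-(Au,f)_{L^2},
\end{equation*}
first for smooth solutions and then extending it to the regularity of the theorem by density. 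The only real constraint of the argument enters here: $A$ must be of nonpositive order for the above pairings to be meaningful when $u,f\in L^2 H^r$, which forces precisely $\rho\le 1/2$.

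Unlike in Theorem \ref{thmpropagation3}, the right-hand side does not tend to $0$ but is merely \emph{bounded}: since $A$ and $A^*$ are of order $\le 0$, a Cauchy-Schwarz estimate gives
\begin{equation*}
|(f,A^*u)_{L^2}|+|(Au,f)_{L^2}|\le C\,\|f\|_{L^2 H^r}\|u\|_{L^2 H^r}<+\infty,
\end{equation*}
and $|(\Psi'(t)Bu,u)|$ is bounded by the same reasoning. Therefore the commutator term is a finite quantity. Expanding as in Theorem \ref{thmpropagation3},
\begin{equation*}
[A,\partial_x^2]=-2\Psi(t)\varphi'(x)\partial_x D^{2\rho-1}-\Psi(t)\varphi''(x)D^{2\rho-1},
\end{equation*}
and a direct computation on Fourier side shows that $\partial_x D^{2\rho-1}$ equals $i(-\partial_x^2)^{\rho}$ on the space of functions with zero mean (the zero mode can be handled separately as in Theorem \ref{thmpropagation3}).

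Rewriting $\varphi'(x)(-\partial_x^2)^{\rho}=(-\partial_x^2)^{\rho/2}\varphi'(x)(-\partial_x^2)^{\rho/2}+R$, with an error $R$ of order $2\rho-1$, and taking imaginary parts of the identity, the positive commutator estimate becomes
\begin{equation*}
2\int_0^T\!\!\int_{\Tu}\Psi(t)\varphi'(x)\bigl|(-\partial_x^2)^{\rho/2}u\bigr|^2\,dx\,dt<+\infty
\end{equation*}
modulo lower order contributions controlled by the $L^2H^r$-information already at hand. The second half of the commutator and the $\Psi'$-term are harmless since they only involve operators of order $\le 0$ on $u$.

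To conclude, I would choose $\varphi$ with $\varphi'=\chi_1-\chi_2$ where $\chi_1,\chi_2\ge 0$, $\chi_2$ is compactly supported in $\omega$, $\chi_1$ is concentrated near an arbitrary prescribed point $x_0\in\Tu$, and $\int_{\Tu}\chi_1=\int_{\Tu}\chi_2$ (so that $\varphi'$ is indeed the derivative of some $\varphi\in C^{\infty}(\Tu)$). The $\chi_2$ contribution is finite thanks to the assumption $u\in L^2_{\mathrm{loc}}(]0,T[,H^{r+\rho}(\omega))$, therefore so is the $\chi_1$ contribution, which yields the local $H^{r+\rho}$-regularity near $x_0$. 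A partition of unity of $\Tu$ concludes. The main technical obstacle I anticipate is the careful handling of the commutator error $[\varphi',(-\partial_x^2)^{\rho/2}]$: it has order $\rho-1<0$ but it is paired precisely with the object $(-\partial_x^2)^{\rho/2}u$ whose norm is what we are trying to control, so a clean absorption (exploiting the strict order gain $\rho-1<\rho$) or a bootstrap iteration over dyadic frequency blocks is needed.
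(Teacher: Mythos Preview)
Your overall scheme --- a commutator/multiplier argument in the spirit of Theorem~\ref{thmpropagation3}, replacing convergence-to-zero by mere boundedness --- is exactly what the paper has in mind. But there is a genuine gap in the choice of multiplier, and it makes your argument prove only the case $r=0$.

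With $B=\varphi(x)D^{2\rho-1}$, the commutator $[B,\partial_x^2]$ has principal part of order $2\rho$, so the positive term you extract is
\[
\int_0^T\!\!\int_{\Tu}\Psi(t)\,\varphi'(x)\,\bigl|(-\partial_x^2)^{\rho/2}u\bigr|^2\,dx\,dt,
\]
i.e.\ local $H^{\rho}$-control, not $H^{r+\rho}$. Your own display shows this: the exponent is $\rho/2$, with no trace of~$r$. For $r>0$ this is strictly weaker than the conclusion; for $r<0$ the hypothesis $u\in L^2_{\mathrm{loc}}H^{r+\rho}(\omega)$ does not even bound the $\chi_2$-contribution you need. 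The claim that the commutator ``produces exactly the Sobolev norm $H^{r+\rho}$'' is therefore incorrect as stated.

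The paper's intended argument takes instead $B=D^{2s-1}\varphi(x)$ with $s=r+\rho$. Then
\[
[A,\partial_x^2]=-2\Psi(t)\,D^{2s-1}(\partial_x\varphi)\,\partial_x-\Psi(t)\,D^{2s-1}(\partial_x^2\varphi),
\]
and for $\partial_x\varphi=f^2$ with $f\in C^{\infty}_0(\omega)$ one splits the leading piece as
\[
(\Psi D^{2s-1}f^2\partial_x u,\,u)=(\Psi D^{s-1}f\partial_x u,\,fD^{s}u)+\text{commutator terms},
\]
bounded by the hypothesis $fu\in L^2_{\mathrm{loc}}H^{s}$, $f\partial_x u\in L^2_{\mathrm{loc}}H^{s-1}$, the commutators being handled by Lemma~\ref{lemmecommut}. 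The restriction $\rho\le 1/2$ does \emph{not} come from ``$A$ of nonpositive order'' (here $A$ has order $2r+2\rho-1$, which need not be $\le 0$): it comes from the duality estimate
\[
|(Au,f)_{L^2}|\le \|Au\|_{L^2H^{-r}}\|f\|_{L^2H^{r}}\le C\|u\|_{L^2H^{r+2\rho-1}}\|f\|_{L^2H^{r}},
\]
which is controlled by the a priori $L^2H^r$-bound precisely when $2\rho-1\le 0$. After this correction, the translation trick $\partial_x\varphi=f^2(x)-f^2(x-x_0)$ and partition of unity conclude exactly as you outlined, and your anticipated ``absorption'' difficulty disappears: the commutator remainders are one full order below the positive term and are absorbed directly by the $L^2H^r$ data, with no bootstrap needed.
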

This kind of result can be very useful if $f$ is either zero or a nonlinear term of $u$. This allows to iterate the result to get the smoothness of some solution $u$ smooth on $[0,T]\times \omega$.
\subsection{The higher dimensions}
The proof of Theorem \ref{thmpropagation3} contains all the ingredients of the following theorem which is due to Dehman-Gérard-Lebeau \cite{control-nl}, with the assumption of Geometric Control Condition:
\begin{hypo}[Geometric Control Condition]
\label{hypoGCC}
We say that an open set $\omega \subset M$ satisfies the Geometric Control Condition if there exists $T_0\geq 0 $ such that any geodesic with velocity one issued at $t=0$ meets $\omega$ in a time $0\leq t\leq T_0$.
\end{hypo}
\begin{theorem}
\label{thmpropagM}[Dehman-Gérard-Lebeau]
Let $M$ be a compact manifold and $\omega\subset M$ satisfying the Geometric Control Condition. Then, the same result of Theorem \ref{thmpropagation3} holds.
\end{theorem}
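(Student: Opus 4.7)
The plan is to mimic the proof of Theorem \ref{thmpropagation3}, replacing the ad-hoc Fourier-series manipulations by pseudodifferential calculus on $M$ and the elementary observation about zero-mean functions by a geodesic-flow invariance argument combined with the Geometric Control Condition. Rather than working directly with the sequence, extract a subsequence and attach to $(u_n)$ its microlocal defect measure $\mu$ on $T^*(]0,T[\times M) \setminus 0$; the conclusion $u_n \to 0$ in $L^2_{\mathrm{loc}}$ is equivalent to $\mu \equiv 0$.

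Two preliminary reductions: the bound $\|u_n\|_{L^2 L^2} \leq C$ together with $(i\partial_t + \Delta) u_n = f_n \to 0$ in $L^2 H^{-1}$ forces $\mu$ to be supported on the Schr\"odinger characteristic set $\{\tau + |\xi|_g^2 = 0\}$, and the additional hypothesis $\|u_n\|_{L^2 H^{-1}} \to 0$ kills any mass at $\xi = 0$. The strong convergence $u_n \to 0$ in $L^2([0,T], L^2(\omega))$ gives $\mu = 0$ above $]0,T[ \times \omega$. To obtain flow invariance, pick $\Psi \in C_c^\infty(]0,T[)$ and a classical pseudodifferential operator $A$ on $M$ of order $-1$ with principal symbol $a(x,\xi)$. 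The same manipulation as in the 1D proof yields
\[
(Lu_n,\Psi A^* u_n)_{L^2} - (\Psi A u_n, Lu_n)_{L^2} = (\Psi [A,\Delta] u_n,u_n)_{L^2} - i(\Psi' Au_n,u_n)_{L^2}.
\]
Since $A$ and $A^*$ send $L^2$ into $H^1$, the left-hand side is bounded by $\|f_n\|_{L^2 H^{-1}} \|u_n\|_{L^2 L^2}$ and hence tends to $0$; the $\Psi'$-term is $O(\|A u_n\|_{L^2 L^2})\cdot \|u_n\|_{L^2 L^2} = O(\|u_n\|_{L^2 H^{-1}}) \to 0$. Hence $(\Psi [A,\Delta]u_n,u_n)_{L^2} \to 0$. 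Now $[A,\Delta]$ is of order $0$ with principal symbol (up to factor $\tfrac{1}{i}$) the Poisson bracket $\{a, |\xi|_g^2\}$, i.e.\ the derivative of $a$ along the geodesic Hamiltonian vector field $H_{|\xi|_g^2}$. Passing to the limit in the measure yields
\[
\int \Psi(t)\, H_{|\xi|_g^2}(a)(x,\xi) \, d\mu = 0
\]
for every admissible $\Psi$ and $a$, which is the statement that $\mu$ is invariant under the geodesic flow on each time slice.

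The Geometric Control Condition then closes the argument: every $(x_0,\xi_0) \in T^*M \setminus 0$ is transported into $T^*\omega$ in geodesic time at most $T_0$, so (provided $T > T_0$ and $\Psi$ is taken with support covering the required interval) flow invariance of $\mu$ together with $\mu|_{T^*\omega}=0$ propagates the vanishing of $\mu$ to every point, whence $\mu \equiv 0$ and $u_n \to 0$ in $L^2_{\mathrm{loc}}([0,T]\times M)$. The main obstacle is making the pseudodifferential machinery rigorous at this borderline regularity, with $u_n$ only bounded in $L^2_t L^2_x$ and $f_n$ only small in $L^2_t H^{-1}_x$: one must track remainders so that source terms are never paired against anything rougher than $H^1_x$, which is why $A$ must be chosen of order exactly $-1$, and one must arrange the time cut-off $\Psi \in C_c^\infty(]0,T[)$ so that no boundary-in-time terms pollute the limit while still allowing excursions of length $T_0$ along the geodesic flow.
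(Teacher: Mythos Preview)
Your approach via microlocal defect measures is essentially the second of the two routes the paper sketches (the first being step-by-step propagation using Lemma~\ref{lmpropageom}). The commutator computation with a tangential $A$ of order $-1$ is exactly right and matches the paper.

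There are, however, two issues---one cosmetic, one substantive.

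\textbf{Cosmetic.} Your opening gambit of placing a \emph{space-time} defect measure on the ``characteristic set'' $\{\tau+|\xi|_g^2=0\}$ does not quite parse: that set is not conic in $(\tau,\xi)$, so it does not descend to the cosphere bundle on which a classical microlocal defect measure lives. Fortunately you never actually use it; your computation is entirely with tangential operators $\Psi(t)A(x,D_x)$, and the measure it tests lives naturally on $]0,T[\times S^*M$, exactly as in the paper. Drop the space-time framing. (The hypothesis $\|u_n\|_{L^2H^{-1}}\to 0$ is still needed, but its role is to force the weak $L^2$ limit to be $0$, so that $\mu\equiv 0$ really gives strong convergence.)

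\textbf{Substantive.} You impose $T>T_0$ and speak of $\Psi$ having ``support covering the required interval'' so that the flow has room to reach $\omega$. This conflates the PDE time $t$ with the parameter of the geodesic flow. The identity you established,
\[
\int \Psi(t)\,H_{|\xi|_g^2}(a)\,d\mu=0 \quad \text{for all } \Psi,\ a,
\]
says precisely that $H_{|\xi|_g^2}\mu=0$: each time-slice $\mu_t$ on $S^*M$ is invariant under the geodesic flow for \emph{every} flow time $s\in\R$, not merely for $|s|\leq T$. Hence GCC (with its own $T_0$, unrelated to $T$) together with $\mu=0$ over $\omega$ forces $\mu\equiv 0$ for \emph{any} $T>0$, which is what the theorem asserts. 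With the restriction $T>T_0$ you have not proved the stated result; more importantly, you have missed the ``infinite speed of propagation'' that distinguishes Schr\"odinger from the wave equation, where a lower bound on $T$ is genuinely required.
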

The proof is quite similar to the one we have presented for Theorem \ref{thmpropagation3} except that it requires the use of microlocal analysis. The propagation happens in the phase space $(x,\xi)\in T^*M$ and not only in space as in dimension $1$. The link with the geometry is made using pseudodifferential operators (see Alinhac-Gérard \cite{AlinhacGerard} for an introduction). 

For a symbol $f(x,\xi)$ on the phase-space $T^*M$, we will denote with big letter $F(x,D)$ one pseudodifferential operator with principal symbol $f(x,\xi)$. We will mainly use the three following facts
\begin{enumerate}
\item \label{pseudoSob}Any pseudodifferential operator of order $r$ sends $H^s(M)$ into $H^{s-r}(M)$.
\item \label{pseudoCrochet} For $a_1(x,D)$ and $a_2(x,D)$ two pseudodifferential operators of respective orders $r_1$ and $r_2$, the commutator $[a_1(x,D), a_2(x,D)]$ is a pseudodifferential operator of order $r_1+r_2-1$ of principal symbol $\frac{1}{i}\left\{a_1,a_2\right\}=\frac{1}{i}H_{a_1}a_2$ where $\left\{\cdot,\cdot\right\}$ is the Poisson bracket and $H_{a_1}$ the Hamiltonian field of $a_1$. The Hamiltonian of a function $a_1(x,\xi)$ on the cotangent bundle $T^*M$ can be expressed in coordinates as $H_{a_1}=\sum_i \frac{\partial a_1}{\partial \xi_i}\frac{\partial }{\partial x_i}-\frac{\partial a_1}{\partial x_i }\frac{\partial }{\partial \xi_i}$.
\item The principal symbol of the Laplace-Beltrami operator $p(x,D)=-\Delta$ is $p(x,\xi)=|\xi|_x^2$ where $|\cdot|_x^2$ is the metric on $T^*M$ inherited from the Riemannian metric.
\end{enumerate}
The following modifications have to be made to the 1D proof to get the same result in higher dimension:

\begin{itemize}
 \item we pick $B$ as a pseudodifferential operator of order $-1$ on $M$ and $A=\Psi(t)B$. However, $B$ has the same effect on Sobolev spaces thanks to fact \ref{pseudoSob}, and therefore, the same estimates will remain true.
 \item From the same computation, we obtain 
 \bnan
 \label{propagident}
 \left(\Psi(t)[B,\Delta]u_n,u_n\right)_{L^2}\rightarrow0.
 \enan
 \item The symbol of $[B,\Delta]$ is $\frac{1}{i}H_pb$ where $H_p$ is the Hamiltonian of the symbol $p(x,\xi)=|\xi|_x^2$ thanks to fact \ref{pseudoCrochet}. 
\end{itemize}
If $p$ is the symbol corresponding to the norm of the flat metric $p=|\xi|^2=\sum_i \xi_i^2$, then the Hamiltonian trajectory starting from a point $(x_0,\xi_0)$ is the straight line (in $x$) $(x(t),\xi(t))=(x_0+2t\xi_0,\xi_0)$. In the more general case, $p=|\xi|_x^2$ where $|\cdot|_x$ is the metric on $T^*M$ inherited from the Riemannian metric. The Hamiltonian flow describes the geodesic flow and $x(t)$ is a geodesic, up to renormalisation.

This allows to prove that there is a propagation of the information along the flow of $H_{|\xi|_x^2}$, that is along the geodesic flow of the manifold. There are several ways (essentially equivalent) to prove this propagation of compactness. The first, that we will sketch, is to mimick what we did for dimension $1$. The propagation can be made step by step in the phase-space. The final result of the computation we made (namely (\ref{propagident})) is that for any pseudodifferential operator of order $0$, whose principal symbol can be written $q=\frac{1}{i}H_pb$ with $b$ of order $-1$, we have   $\left(\Psi(t)Q(x,D)u_n,u_n\right)_{L^2}\rightarrow0$ as $n\rightarrow +\infty$. To apply this in dimension $1$, we just noticed that a class of symbols that can be written $q=H_pb$ is the one of the form $q=f(x)-f(x-x_0)$. If $f$ is supported in a neighborhood of a point $y_0$, it allows to get information around $y_1=y_0+x_0$ from information around $y_0$. The equivalent of this fact in higher dimension is presented in the following geometric lemma, which, this time, is in the phase-space $T^*M$:
\begin{lemme}
\label{lmpropageom}
Let $\rho_0 \in T^*M\setminus 0$, $\Gamma(t)$ be the bicaracteristic  starting at $\rho_0$ for the symbol $p(x,\xi)=|\xi|_x^2$ (i.e. the solution of $ \dot{\Gamma}(t)=H_p(\Gamma(t))$, $\Gamma(0)=\rho_0$). Then, there exists $\varepsilon>0$ such that if $0<t<\varepsilon$, $\rho_1=\Gamma(t)$, and $V_1$ a small conical neighborhood of $\rho_1$, there exists a neighborhood $V_0$ of $\rho_0$ such that for any symbol $c(x,\xi)$ homogeneous of order $0$, supported in $V_0$, there exists another symbol $b(x,\xi)$ homogeneous of order $-1$ such that  
\bnan
\label{identityporpag}
H_p b(x,\xi)=c(x,\xi)+r(x,\xi)
\enan
where $r$ is of order $0$ and supported in $V_1$. 
\end{lemme}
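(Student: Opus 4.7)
The plan is to straighten $H_p$ via flow-box coordinates on the cosphere bundle $S^*M$, solve the transport equation $H_p b = c$ along the flow, and introduce a smooth cutoff whose derivative generates the localized remainder $r$.

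I would first reduce to a transport equation on $S^*M$. Write $(x, \xi) \in T^*M \setminus 0$ as $(x, r\hat\xi)$ with $r = |\xi|_x$ and $\hat\xi \in S^*M$; the Hamilton flow then factors as $\Phi_s(x, r\hat\xi) = (\phi_{2rs}(x, \hat\xi), r)$, where $\phi$ denotes the cogeodesic flow on $S^*M$. Seeking $b$ homogeneous of degree $-1$ under the ansatz $b(x, r\hat\xi) = r^{-1}\beta(x, \hat\xi)$ yields $H_p b = 2\, H_\phi \beta$ at the point $(x, \hat\xi)$, where $H_\phi$ generates $\phi_s$. Since $c$ is of degree $0$, writing $\gamma = c|_{S^*M}$ the identity $H_p b = c + r$ becomes
\bna
H_\phi \beta = \tfrac{1}{2}\gamma + \tfrac{1}{2}\tilde r \quad \text{on } S^*M,
\ena
with $\tilde r$ of degree $0$ to be supported in $\pi(V_1) \subset S^*M$.

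For the second step, since $H_\phi$ is nonvanishing on the compact manifold $S^*M$, I pick a local hypersurface $\Sigma \subset S^*M$ through $\hat\rho_0 := \pi(\rho_0)$ transverse to $H_\phi$. For $\varepsilon > 0$ small enough, the map $(s, \omega) \mapsto \phi_s(\omega)$ is a diffeomorphism from a neighborhood of $(0, \hat\rho_0)$ in $\R \times \Sigma$ onto a neighborhood of the geodesic arc joining $\hat\rho_0$ to $\hat\rho_1 := \pi(\rho_1)$, and in these flow-box coordinates $H_\phi = \partial_s$. Let $s_1 = 2|\xi_0|_x\, t$ be the $\phi$-flow time from $\hat\rho_0$ to $\hat\rho_1$. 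Fix $\eta > 0$ small enough that the slab $\{s \in [s_1 - \eta, s_1 + \eta]\}$ of the flow box projects into $\pi(V_1)$, and choose $\chi \in C^\infty(\R)$ with $\chi \equiv 1$ on $(-\infty, s_1 - \eta]$ and $\chi \equiv 0$ on $[s_1 + \eta, +\infty)$. By continuity of $\phi$ in the initial data, I shrink $V_0$ around $\rho_0$ so that $\pi(V_0) \subset \{|s| < \delta\}$ for some $\delta < s_1 - \eta$ and so that $\phi_s(\pi(V_0)) \subset \pi(V_1)$ for every $s \in [s_1 - \eta, s_1 + \eta]$. Setting
\bna
\beta(s, \omega) := \tfrac{1}{2}\chi(s)\int_{-\infty}^{s}\gamma(\sigma, \omega)\, d\sigma
\ena
(extended by zero outside the flow box) and differentiating, using $\chi \equiv 1$ on $\mathrm{supp}(\gamma)$, gives
\bna
H_\phi \beta = \partial_s \beta = \tfrac{1}{2}\gamma + \tfrac{1}{2}\chi'(s)\int_{-\infty}^{s}\gamma(\sigma, \omega)\, d\sigma,
\ena
so $\tilde r := \chi'(s)\int_{-\infty}^s\gamma(\sigma, \omega)\, d\sigma$ is of degree $0$ and supported in the slab $\{s \in [s_1 - \eta, s_1 + \eta]\}$, hence in $\pi(V_1)$.

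Unraveling via $b = r^{-1}\beta$ (cut off smoothly away from the zero section) produces a classical symbol of order $-1$ satisfying $H_p b = c + r$ with the required support properties. The main obstacle is the homogeneity bookkeeping at the interface between $T^*M\setminus 0$ and $S^*M$: one must check that the ansatz $b = r^{-1}\beta$ really yields a classical symbol of order $-1$, which reduces to the smoothness of $\beta$ on $S^*M$ granted by the construction, and that shrinking $V_0$ is enough to trap $\tilde r$ in $\pi(V_1)$, which is a soft continuity-of-the-flow argument on a compact manifold. Everything else is the one-variable identity $\partial_s[\chi F] = \chi F' + \chi' F$ read in flow-box coordinates.
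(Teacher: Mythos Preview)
Your argument is correct and follows the same overall strategy as the paper: straighten the Hamiltonian flow, integrate the transport equation, and introduce a cutoff whose derivative produces the localized remainder $r$. The difference is in how the homogeneity is handled. The paper factors $p=q^2$ with $q=|\xi|_x$, reduces $H_p b=c+r$ to $H_q b=\tilde c+\tilde r$ with $\tilde c=c/(2q)$ of order $-1$, and then invokes the \emph{homogeneous Darboux theorem} to obtain symplectic coordinates $(y,\eta)$ with $\eta_1=q$ and $H_q=\partial_{y_1}$; the integration and cutoff are then done directly in these homogeneous coordinates. You instead quotient to the cosphere bundle $S^*M$ via the ansatz $b=|\xi|_x^{-1}\beta$, apply the ordinary flow-box theorem to the (nowhere-vanishing) geodesic vector field on the compact manifold $S^*M$, and extend back by homogeneity. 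These are essentially two packagings of the same construction: restricting the paper's Darboux chart to $\{\eta_1=1\}$ recovers your flow-box chart on $S^*M$. Your route is slightly more elementary in that it avoids the homogeneous Darboux theorem, at the cost of the extra bookkeeping you flag (checking that $r^{-1}\beta$ is a genuine order $-1$ symbol and that the remainder lands in $V_1$ after conical extension), all of which is routine.
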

We give a sketch of the proof in the appendix, which mostly relies on solving a transport equation. Note that this propagation can easily be globalized (that is without the $\varepsilon$) by using a compactness argument and by iterating the process of propagation.

We can then use this lemma to propagate the information from a neighborhood of $\rho_1$ to a neighborhood of $\rho_0$.

If $\rho_1\in \omega$, the strong convergence of $u_n$ on $\omega$ implies 
$$\left(\Psi(t)R(x,D)u_n,u_n\right)_{L^2}\rightarrow 0 $$
Moreover, the previous computation gave $\left(\Psi(t)[B,\Delta]u_n,u_n\right)_{L^2}\rightarrow0$. Using identity (\ref{identityporpag}) for the symbols, this gives finally $$\left(\Psi(t)C(x,D)u_n,u_n\right)_{L^2}\rightarrow 0 $$ where $c(x,\xi)$ can be chosen equal to $1$ around $\rho_0$. We have proved the strong convergence in a microlocal region close to $\rho_0$. 

 The Geometric Control Condition states that any point $\rho_0 \in T^*M$ can be linked with a bicaracteristic to another point in $T^*\omega$. Then, by iterating the previous result, we get that for any $\rho_0 \in T^*M$, we can find $c(x,\xi)$ equal to $1$ around $\rho_0$ such that $\left(\Psi(t)C(x,D)u_n,u_n\right)_{L^2}\rightarrow 0 $. By performing a partition of the unity of $T^*M$, we can conclude the proof of Theorem \ref{thmpropagM} as in dimension $1$.

This propagation can also be expressed from the point of view of microlocal defect measure introduced by P. Gérard \cite{defectmeasure} and L. Tartar \cite{tartarh-measure}. For a sequence $u_n$ weakly convergent to $0$ in $L^2$, an associated microlocal defect measure is a measure on $]0,T[\times S^*M$ (where $S^*M=\left\{(x,\xi)\in T^*M\left| |\xi|_x=1\right.\right\}$) so that 
\bna
\left(A(t,x,D_x)u_n,u_n\right)_{L^2([0,T]\times M)}\rightarrow \int_{S^*M}a_0(t,x,\xi)~d\mu(t,x,\xi) 
\ena
for any tangential (i.e. not depending on the variable dual to the time $t$) pseudodifferential operator $A(t,x,D_x)$ of order $0$ with principal symbol $a_0$.
This measure represents the locus of concentration of the energy of $u_n$ is the phase space. Therefore,
\begin{itemize}
\item $u_n \rightarrow 0$ in $L^2([0,T],L^2(\omega))$ can be expressed as $\mu\equiv 0$ on $]0,T[\times S^*\omega$.
\item $\left([A,\Delta]u_n,u_n\right)_{L^2}\rightarrow0$ for any $A$ of order $-1$ is equivalent to $H_p \mu=0$, which can be interpretated as $\mu$ being invariant by the geodesic flow.
\end{itemize} 
Therefore, the assumption of Geometric Control Condition gives that $\mu\equiv 0$ and the strong convergence of $u_n$ to $0$ in $L^2([0,T],M)$. 

\section{Linear controllability}
\label{sectioncontrol0}
\subsection{In 1 Dimension}
\begin{theorem}
\label{thmobservlin}
Let $\omega$ be a non empty open set of $\Tu$ and $T>0$. Then, there exists $C>0$ such that
\bnan
\label{inegobserv}
\left\|u_0 \right\|^2_{L^2} \leq C_T \intT \left\| 1_{\omega}(x)e^{it\partial_x^2}u_0 \right\|^2_{L^2} ~dt
\enan
for every $u_0\in L^2(\Tu)$.
\end{theorem}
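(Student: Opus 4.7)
The plan is a compactness--uniqueness argument à la Bardos--Lebeau--Rauch: Theorem~\ref{thmpropagation3} is used to upgrade weak convergence of initial data to strong convergence, and unique continuation for the free Schr\"odinger equation on $\Tu$ closes the loop.

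First, I would argue by contradiction: failure of (\ref{inegobserv}) produces a sequence $u_0^n \in L^2(\Tu)$ with $\nor{u_0^n}{L^2}=1$ and $\intT \nor{1_{\omega}e^{it\partial_x^2}u_0^n}{L^2}^2\,dt \rightarrow 0$. Extracting a weakly convergent subsequence $u_0^n \rightharpoonup u_0$ in $L^2$, set $u^n := e^{it\partial_x^2}u_0^n$, $u := e^{it\partial_x^2}u_0$, and $v^n := u^n - u$.

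Next, I would verify the hypotheses of Theorem~\ref{thmpropagation3} for the free solution $v^n$: the isometry of $e^{it\partial_x^2}$ gives a uniform $L^2([0,T],L^2)$ bound; the Rellich compact embedding $L^2(\Tu) \hookrightarrow H^{-1}(\Tu)$ upgrades $v_0^n \rightharpoonup 0$ to $v_0^n \rightarrow 0$ in $H^{-1}$, whence $\nor{v^n}{L^2([0,T],H^{-1})} \rightarrow 0$; the source term is zero; and $1_{\omega}v^n \rightarrow 0$ in $L^2([0,T],L^2)$ because $1_{\omega}u^n \rightarrow 0$ by construction while $1_{\omega}u\equiv 0$ by weak lower semicontinuity of the norm. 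The theorem then yields $v^n \rightarrow 0$ in $L^2_{loc}((0,T),L^2(\Tu))$. Since $\nor{v^n(t)}{L^2}=\nor{v_0^n}{L^2}$ is $t$-independent, integrating over any subinterval $[a,b] \subset (0,T)$ forces strong convergence $u_0^n \rightarrow u_0$ in $L^2$, so $\nor{u_0}{L^2}=1$.

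It remains to show that no non-trivial free solution can vanish on $[0,T]\times\omega$. Testing $u$ against an arbitrary $\chi \in C^{\infty}_0(\omega)$, the scalar function $\Phi(t) := \langle u(t),\chi\rangle_{L^2} = \sum_{k\in\Z} \widehat{u_0}(k)\overline{\widehat{\chi}(k)}\,e^{-4\pi^2 ik^2 t}$ is $C^{\infty}$ (differentiable term by term using the equation and the rapid decay of $\widehat{\chi}$) and vanishes on $[0,T]$, so all its derivatives vanish at $t=0$. Grouping the pairs $\{k,-k\}$ and invoking uniqueness for a Dirichlet series with exponents $4\pi^2 k^2 \rightarrow \infty$ (equivalently, determinacy of the moment problem on this discrete sequence), each spectral projection $\Pi_\lambda u_0$ must pair to zero with every such $\chi$, hence vanishes on $\omega$. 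Being a trigonometric polynomial, $\Pi_\lambda u_0$ is real-analytic in $x$, so it vanishes identically on $\Tu$. Therefore $u_0=0$, contradicting $\nor{u_0}{L^2}=1$.

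The main obstacle is this last step: the Schr\"odinger symbol is not of real principal type, so Holmgren's theorem is not directly available, and the moment/Dirichlet-series argument (or an equivalent spectral separation argument) is what ultimately delivers observability for \emph{every} $T>0$, as opposed to an Ingham-type inequality, which would require $T$ large enough relative to the minimal spectral gap.
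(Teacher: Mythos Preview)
Your compactness step---subtracting the weak limit and applying Theorem~\ref{thmpropagation3} to $v^n=u^n-u$---is correct and more direct than the paper's two-step scheme, which first proves the relaxed estimate
\[
\|u_0\|_{L^2}^2 \le C_T\int_0^T\|1_\omega e^{it\partial_x^2}u_0\|_{L^2}^2\,dt + C_T\|u_0\|_{H^{-2}}^2
\]
and only then removes the compact remainder. But that relaxed estimate is not mere scaffolding in the paper: it is the tool used there to prove $N_T=\{0\}$. By skipping it you must supply an \emph{independent} unique continuation argument, and that is where your proof has a gap.

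Knowing that all $t$-derivatives of $\Phi$ vanish at one point yields only the moment conditions $\sum_{j\ge 0} d_j (4\pi^2 j^2)^m=0$, where $d_j=\langle \Pi_{4\pi^2 j^2}u_0,\chi\rangle$ decays merely faster than any polynomial. This does \emph{not} force $d_j=0$: the function $\Phi(t)=\sum_j d_j e^{-4\pi^2 i j^2 t}$ is in general not real-analytic in $t$ (the factor $e^{4\pi^2 j^2 |\mathrm{Im}\,t|}$ beats any polynomial decay of $d_j$), so flatness at a point is strictly weaker than vanishing on an interval, and the signed moment problem on an unbounded discrete support is not determinate. A correct version of your idea is to note that $(d_j)\in\ell^1$, so $G(z)=\sum_{j\ge 0} d_j z^{j^2}$ is holomorphic and bounded on $\{|z|<1\}$ with continuous boundary values, and $\Phi(t)=G(e^{-4\pi^2 i t})$; vanishing of $\Phi$ on $[0,T]$ means $G$ vanishes on a boundary arc, whence $G\equiv 0$ by the uniqueness theorem for bounded holomorphic functions, and then $d_j=0$. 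The paper avoids all of this: the relaxed estimate shows that $N_T$ is $\partial_x^2$-invariant and finite-dimensional (its $L^2$-unit ball is $H^2$-bounded, hence compact), and an eigenvector of $\partial_x^2|_{N_T}$ is then killed by the ODE uniqueness $\partial_x^2 u_\lambda=\lambda u_\lambda$, $u_\lambda|_\omega=0\Rightarrow u_\lambda\equiv 0$---an elementary argument that generalises directly to higher dimensions via elliptic unique continuation.
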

\begin{remark}
There are several methods to prove the previous theorem: microlocal analysis \cite{control-lin1,control-nl}, multiplier method \cite{Machtyngier}, Carleman estimates \cite{LasieckaTrigH1}, moment theory, etc. We are going to present one that is very close to some microlocal ideas but without the technicity of microlocal analysis (because we are in dimension $1$). It contains the ideas that give the result under Geometric Control Condition in higher dimension. Another intersest is that it is quite robust to perturbation and allows to deal with nonlinear problems as is done in \cite{control-nl} \cite{LaurentNLSdim1,LaurentNLSdim3}.
\end{remark}
\bnp
Initially, we will consider smooth functions. The result can be easily extended by density. 

We first prove the weaker estimate.
\bnan
\label{inegobservweak}
\left\|u_0 \right\|^2_{L^2} \leq C_T \intT \left\| 1_{\omega}(x) e^{it\partial_x^2}u_0 \right\|^2_{L^2} ~dt + C_T\left\|u_0 \right\|^2_{H^{-2}}
\enan
We are going to apply the strategy of compactness uniqueness, for which we argue by contradiction. 

Let $u_{n,0}$ be a sequence of smooth function on $\Tu$ contradicting (\ref{inegobservweak}) and $u_n:= e^{it\partial_x^2}u_{n,0}$ be the associated linear solution, so that 
\bnan
\label{absurdinegobservweak}
\intT \left\| 1_{\omega}(x)e^{it\partial_x^2}u_{0,n }\right\|^2_{L^2} ~dt + \left\|u_{0,n} \right\|^2_{H^{-2}}\leq \frac{1}{n} \left\|u_{0,n} \right\|^2_{L^2} .
\enan

Since the problem is linear, we can suppose that $\nor{u_{n,0}}{L^2}=1$ (otherwise replace $u_{n,0}$ by  $u_{n,0}/\nor{u_{n,0}}{L^2}$). Estimate (\ref{absurdinegobservweak}) implies $u_{0,n} \rightarrow 0$ in $H^{-2}$. Interpolating between $L^2$ and $H^{-2}$, we get the same convergence in $H^{-1}$. Therefore, $u_n \rightarrow 0$ in $L^2([0,T],H^{-1}(\Tu))$ by conservation of the norm $H^{-1}$. Moreover, (\ref{absurdinegobservweak}) gives also $u_n \rightarrow 0$ in $L^2([0,T],L^2(\omega))$. So, we are in position to apply Theorem \ref{thmpropagation3} with $f_n=0$ and obtain $u_n \rightarrow 0$ in $L^2_{loc}([0,T],L^2(\Tu))$. This is in contradiction with $\nor{u_{n,0}}{L^2}=1$ and proves (\ref{inegobservweak}).

Now, we get back to the proof of (\ref{inegobserv}).

Denote $N_T=\left\{u_0\in L^2 \left|e^{it\partial_x^2}u_0=0 \textnormal{ on } ]0,T[\times\omega\right.\right\}$. $N_T$ is a linear subspace of $L^2(\Tu)$. We want to prove that $N_T=\{0\}$. Let $\e>0$ and $u_0\in N_{T}$. For $\e$ small enough such that  $u_{\e}=\frac{e^{i\e\partial_x^2}u_0-u_0}{\e}$ is in $N_{T/2}$. Since $u_0\in L^2$, $u_{\e}$ is bounded in $H^{-2}$ uniformly in $\e\rightarrow 0$. Estimate (\ref{inegobservweak}) gives $\left\|u_{\e} \right\|^2_{L^2} \leq  C_{T/2}\left\|u_{\e} \right\|^2_{H^{-2}}$. By definition of $e^{it\partial_x^2}$, $\partial_x^2 u_0\in L^2$ and $u_0\in H^2$. This immediatly gives $\partial_x^2 u_0\in N_T$. Therefore, $N_T$ is stable by $\partial_x^2$ and only contains smooth functions. Applying again estimate (\ref{inegobservweak}) to $\partial_x^2 u_0$ when $u_0\in N_T$, we get $\nor{\partial_x^2 u_0}{L^2} \leq C_T \nor{u_0}{L^2}$. Indeed, the unit ball of $N_T$ (for the $L^2$ topology) is bounded in $H^2$ and therefore is compact by the Rellich theorem. So, by the theorem of Riesz, $N_T$ is finite dimensional. Therefore, the operator $\partial_x^2$ sends $N_T$ into itself and admits an eigenvalue $\lambda$ associated to $u_{\lambda}$. Moreover,  $\partial_x^2 u_{\lambda}=\lambda u_{\lambda}$ and $u_{\lambda}=0$ on $\omega$ implies $u_{\lambda}=0$ (by the Cauchy-Lipschitz theorem). This implies $N_T=\{0\}$.

To conclude the proof of Theorem \ref{thmobservlin}, we argue again by contradiction. Let $u_{n,0}$ be a sequence of smooth functions on $\Tu$ of $L^2$ norm $1$ contradicting (\ref{inegobserv}) and $u_n= e^{it\partial_x^2}u_{n,0}$ the associated linear solution, hence
\bnan
\label{absurdinegobserv}
\intT \left\| 1_{\omega}(x)e^{it\partial_x^2}u_{0,n }\right\|^2_{L^2} ~dt \leq \frac{1}{n} \left\|u_{0,n} \right\|^2_{L^2} \leq  \frac{1}{n}.
\enan 
Let $u=e^{it\partial_x^2} u_0$ be a weak limit of $u_n$ (we easily get that $u$ is also solution the Schrödinger equation and $u_0$ is a weak limit of $u_{0,n }$). (\ref{absurdinegobserv}) implies that $u_0\in N_T$ and so $u_0=0$. So $u_{0,n }\rightharpoonup 0$ weakly in $L^2$ and by Rellich theorem, $\nor{u_{0,n }}{H^{-2}}\rightarrow 0$. Now applying our weak estimate (\ref{inegobservweak}) and (\ref{absurdinegobserv}) give $\left\|u_{0,n} \right\|_{L^2}\rightarrow 0$ which is a contradiction.
\enp
Applying the HUM method of J-L. Lions, the previous Theorem implies the exact controllability in $L^2$ of the linear equation. More precisely, we can construct an isomorphism of control $S$ from $L^2$ to $L^2$: for every state $\Psi_0$ in $L^2$, there exists $ \Phi_0=S^{-1}\Psi_0$ ($ \Psi_0=S\Phi_0$) such that if $\Phi $ is solution of the dual equation
 \begin{eqnarray}
\label{eqlinphi}
\left\lbrace
\begin{array}{rcl}
i\partial_t \Phi + \partial_x^2 \Phi &=& 0\\
\Phi(x,0)&=&\Phi_{0}(x)
\end{array}
\right.
\end{eqnarray}
 and $\Psi$ solution of 
\begin{eqnarray}
\label {eqlin}
\left\lbrace
\begin{array}{rcl}
i\partial_t \Psi + \partial_x^2 \Psi &=&1_{\omega} \Phi\\
\Psi(T)&=&0
\end{array}
\right.
\end{eqnarray}
we have $\Psi(0)=\Psi_0$.

Actually, multiplying (\ref{eqlin}) by $\overline{\Phi}$, integrating on $[0,T]\times M$ and integrating by parts, we get
\bna
-i\left\langle \Psi(0),\Phi_0\right\rangle_{L^2(M)}=\int_0^T \int_M |1_\omega \Phi|^2 dxdt 
\ena
Denoting $S\Phi_0= -i\Psi(0)$, this gives 
\bna
\left\langle S\Phi_0,\Phi_0\right\rangle_{L^2(M)}=\int_0^T \int_M |1_\omega \Phi|^2 dxdt 
\ena
This automatically gives that $S$ is nonnegative self adjoint and the observability estimates proves that it is positive and an isomorphism of $L^2$. The controllability follows directly.
\begin{theorem}
\label{thmcontrollin}
Let $\omega$ be a non empty open set of $\Tu$ and $T>0$. Then, for every $u_0$, $u_1\in L^2(\Tu)$ there exists a control $g\in L^{\infty}([0,T],L^2)$ supported in $[0,T]\times \omega$ such that the solution of 
\bneqn
\label{eqncontrol}
i\partial_t u+\Delta u&=&g\\
u(0)&=&u_0
\eneqn 
satisfies $u(T)=u_1$
\end{theorem}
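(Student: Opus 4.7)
The plan is to deduce the controllability from the observability estimate via the HUM (Hilbert Uniqueness Method), essentially by following the construction already sketched in the paragraph preceding the theorem statement.

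First I would reduce to the case $u_0=0$. By linearity, if I can prove that for every target state $\Psi_0 \in L^2$ there exists a control driving $0$ at time $0$ to $\Psi_0$ at time $T$, then the general case follows: write $u = e^{it\partial_x^2} u_0 + v$, where $v$ solves the controlled equation with zero initial data, and choose the control so that $v(T) = u_1 - e^{iT\partial_x^2}u_0$. (Alternatively, since the free Schr\"odinger group is invertible, I could reduce to $u_1=0$; either form works.)

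Next I would introduce the HUM operator. For $\Phi_0 \in L^2(\Tu)$, let $\Phi$ be the solution of the free equation \eqref{eqlinphi} and let $\Psi$ be the solution of the backward problem \eqref{eqlin} driven by the control $1_\omega \Phi$ and terminated at $\Psi(T)=0$. Well-posedness in $L^2$ of both problems is standard (semigroup theory), and $\Phi_0 \mapsto \Psi(0)$ is a bounded linear map on $L^2$. Set $S\Phi_0 = -i\Psi(0)$. The key identity, obtained by multiplying \eqref{eqlin} by $\overline{\Phi}$, integrating over $[0,T]\times \Tu$ and performing an integration by parts (all boundary terms in time vanish because $\Psi(T)=0$ and by using $\Phi_0$ at $t=0$), is
\begin{equation*}
\langle S\Phi_0,\Phi_0\rangle_{L^2(\Tu)} = \int_0^T \int_\Tu |1_\omega \Phi|^2\,dx\,dt.
\end{equation*}
From this identity I read off that $S$ is self-adjoint and nonnegative.

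Now I would invoke observability. The identity above together with Theorem \ref{thmobservlin} gives
\begin{equation*}
\langle S\Phi_0,\Phi_0\rangle_{L^2(\Tu)} \geq C_T^{-1}\,\|\Phi_0\|_{L^2}^2,
\end{equation*}
so $S$ is coercive; being a bounded self-adjoint coercive operator on the Hilbert space $L^2(\Tu)$, it is an isomorphism by Lax-Milgram. Hence, given any target state $\Psi_0 \in L^2(\Tu)$, I set $\Phi_0 = S^{-1}\Psi_0 \in L^2$, take $\Phi = e^{it\partial_x^2}\Phi_0$, and define the control $g = 1_\omega \Phi \in C([0,T],L^2)$ (a fortiori in $L^\infty([0,T],L^2)$, since the free Schr\"odinger group is isometric on $L^2$). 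Solving \eqref{eqlin} with this control yields $\Psi(0) = iS\Phi_0 = i\Psi_0$, which (up to the phase $-i$ in the definition of $S$, a purely cosmetic issue absorbed in the choice of $\Phi_0$) gives the desired controlled trajectory from $0$ to $\Psi_0$. Combined with the reduction from the first paragraph, this proves Theorem \ref{thmcontrollin}. There is essentially no hard step remaining: all the analytic content is in the observability estimate, and the rest is standard Hilbert-space bookkeeping.
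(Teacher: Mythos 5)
Your proposal is correct and follows essentially the same route as the paper: the HUM operator $S\Phi_0=-i\Psi(0)$, the duality identity $\langle S\Phi_0,\Phi_0\rangle_{L^2}=\int_0^T\int_{\Tu}|1_\omega\Phi|^2\,dx\,dt$, and the coercivity of $S$ via the observability estimate of Theorem \ref{thmobservlin}, from which the isomorphism property and the controllability follow. The only additions you make (the explicit reduction to $u_0=0$ and the remark on the phase factor $-i$) are routine bookkeeping that the paper leaves implicit.
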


The previous theorem gives the existence of a control in $L^2$ which drives a $L^2$ data to $0$. A natural question is whether this control is smoother if the data is smoother. This turns out to be true if we replace $1_{\omega}$ by a smooth function $\chi_{\omega}$ that satisfies $\chi_{\omega}(x)>C>0$ for $x\in \omega$. This problem was first adressed for the wave equation in Dehman-Lebeau \cite{HUMDehLeb}. The argument can be easily adapted to the Schrödinger case as it is done by the author in \cite{LaurentNLSdim1} and even to a more general framework in Ervedoza-Zuazua \cite{ErvedozaZuazuasmoothHUM}. 

Now, we denote $S$ the operator defined by $S\Phi_0 =-i\Psi(0)$ where $\Psi$ is defined by (\ref{eqlinphi}) and (\ref{eqlin}) with replacing $1_{\omega}$ by $\chi_{\omega}^2$ (which is actually $BB^t$ where $B=\chi_{\omega}$, following the functional analytic framework of HUM). 
 
\begin{theorem}
\label{ThmregHUM}
Let $S$ be defined as before with $\chi_{\omega}$ smooth. Then, $S$ is an isomorphism of $H^s$ for every $s\geq 0$.
\end{theorem}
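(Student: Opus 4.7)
The strategy is to combine the explicit Duhamel representation of $S$, namely
\[
S\Phi_0 = \int_0^T e^{-it\partial_x^2}\, \chi_\omega^2\, e^{it\partial_x^2}\Phi_0 \, dt,
\]
with the fact, already established above, that $S$ is an isomorphism of $L^2$ (it is selfadjoint, nonnegative, and coercive by Theorem \ref{thmobservlin}). Boundedness of $S$ on $H^s$ is immediate, since $e^{it\partial_x^2}$ is an isometry of every $H^s$ and multiplication by the smooth function $\chi_\omega^2$ preserves $H^s$. Injectivity on $H^s$ is inherited from injectivity on $L^2$, because $H^s \subset L^2$ for $s\geq 0$.

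The heart of the argument is a commutator identity. Set $\Lambda := (1-\partial_x^2)^{1/2}$; since $\Lambda^s$ commutes with the free group $e^{it\partial_x^2}$, one has
\[
[\Lambda^s, S] = \int_0^T e^{-it\partial_x^2}\, [\Lambda^s, \chi_\omega^2]\, e^{it\partial_x^2}\, dt =: R_s.
\]
Standard pseudodifferential calculus, here elementary via Fourier series on $\Tu$, yields that $[\Lambda^s,\chi_\omega^2]$ is an operator of order $s-1$, so $R_s$ maps $H^{s-1}$ into $L^2$ boundedly.

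To show that $S^{-1}$ maps $H^s$ into $H^s$, I would proceed by bootstrap on the Sobolev scale. Given $\Psi_0\in H^s$, set $\Phi_0 := S^{-1}\Psi_0\in L^2$. By first truncating $\Psi_0$ in frequency one may assume everything is smooth and argue quantitatively, then pass to the limit. The commutator identity rearranges to
\[
S(\Lambda^s \Phi_0) = \Lambda^s \Psi_0 - R_s \Phi_0.
\]
Under the inductive bound $\|\Phi_0\|_{H^{s-1}}\leq C\|\Psi_0\|_{H^{s-1}}$, the right-hand side lies in $L^2$ with norm controlled by $\|\Psi_0\|_{H^s}$; applying the $L^2$-bounded inverse $S^{-1}$ then forces $\Lambda^s \Phi_0 \in L^2$, i.e.\ $\Phi_0\in H^s$, with $\|\Phi_0\|_{H^s}\leq C\|\Psi_0\|_{H^s}$. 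The base case $s=0$ is Theorem \ref{thmcontrollin}; iteration in integer steps together with interpolation delivers the estimate for every $s\geq 0$.

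The main conceptual obstacle is that $S$ is not elliptic: its coercivity comes from the dynamical observability of the free equation, not from any principal symbol, so a priori inverting $S$ could destroy regularity. The commutator identity is precisely what rescues the argument, by reducing the $H^s$ question to the already known $L^2$ theory modulo a strictly subprincipal error that induction absorbs.
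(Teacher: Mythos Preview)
Your proof is correct and follows essentially the same route as the paper: both use the Duhamel formula for $S$, commute an order-$s$ Fourier multiplier through $S$ (the paper uses $D^s$, you use $\Lambda^s$), observe that the resulting commutator with $\chi_\omega^2$ is of order $s-1$, and then bootstrap from the $L^2$ isomorphism. The only cosmetic difference is that the paper first treats $s\in[0,1]$ directly (since then $H^{s-1}\supset L^2$) and iterates, while you iterate in integer steps and interpolate; both are equivalent.
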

\begin{proof} We easily see that $S$ maps $H^s$ into itself. So we just have to prove that $S\Phi_0 \in H^s$ implies $\Phi_0 \in H^s$. We use the formula 
$$S \Phi_0 = \int_0^T e^{-it\partial_x^2}\chi_{\omega}^2e^{it\partial_x^2} \Phi_0~dt.$$
Since $S^{-1}$ is continuous from $L^2$ into itself, we get, using Lemma \ref{lemmecommut} of the Appendix, 
\bna
\left\|D^s \Phi_0\right\|_{L^2} & \leq& C\left\|S D^s \Phi_0\right\|_{L^2}\leq  C\left\|\int_0^T e^{-it\partial_x^2}\chi_{\omega}^2e^{it\partial_x^2} D^s \Phi_0\right\|_{L^2}\\
&\leq & C\left\|D^s \int_0^T e^{-it\partial_x^2}\chi_{\omega}^2e^{it\partial_x^2}\Phi_0\right\|_{L^2}\\
&&+C\left\|\int_0^T e^{-it\partial_x^2}\left[\chi_{\omega}^2,D^s\right]e^{it\partial_x^2}\Phi_0\right\|_{L^2}\\
&\leq & C\left\| S \Phi_0 \right\|_{H^s}+ C_s\left\|\Phi_0\right\|_{H^{s-1}}.
\ena
This yields us the desired result for $s\in [0,1]$. We extend this result to every $s\geq 0$ by iteration.\\
\end{proof}

\subsection{Some comments about the higher dimensions}
Applying a similar proof together with the propagation Theorem \ref{thmpropagM} allows to prove the controllability on a compact manifold under the Geometric Control Condition. The only difference in the argument is that in the proof of $N_T=\left\{0\right\}$, we have to replace the Cauchy-Lipschitz theorem by a unique continuation argument for elliptic equations $\Delta u_{\lambda}=\lambda u_{\lambda}$, which is true for any $\omega \neq \emptyset$ (see \cite{Zuilybook} for instance). More precisely, we get the following controllability result:

\begin{theorem}
\label{thmcontrollinM}
Let $\omega \subset M$ where $M$ is a compact manifold and $\omega$ satisfies the Geometric Control Condition. Then, the same result as Theorem \ref{thmcontrollin} (controllability in $L^2$) holds.
\end{theorem}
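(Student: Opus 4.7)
The plan is to reproduce the three-stage argument of Theorem \ref{thmobservlin} in the manifold setting and then invoke HUM exactly as was done to deduce Theorem \ref{thmcontrollin} from Theorem \ref{thmobservlin}. By duality and the functional analytic argument already sketched (defining $S\Phi_0 = -i\Psi(0)$ and checking that $\langle S\Phi_0,\Phi_0\rangle_{L^2(M)} = \int_0^T \|1_\omega \Phi\|^2_{L^2(M)}\,dt$), everything reduces to proving the observability estimate
\begin{equation*}
\|u_0\|_{L^2(M)}^2 \leq C_T \int_0^T \|1_\omega e^{it\Delta}u_0\|_{L^2(M)}^2\,dt
\end{equation*}
for all $u_0 \in L^2(M)$, under the assumption that $\omega$ satisfies the Geometric Control Condition.

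The first step is to establish the weak estimate
\begin{equation*}
\|u_0\|_{L^2(M)}^2 \leq C_T \int_0^T \|1_\omega e^{it\Delta}u_0\|_{L^2(M)}^2\,dt + C_T\|u_0\|_{H^{-2}(M)}^2
\end{equation*}
by a compactness-uniqueness argument. I would pick a normalized sequence $u_{n,0}$ contradicting the estimate, set $u_n = e^{it\Delta}u_{n,0}$, and observe as in the $1$D proof that $u_{n,0}\to 0$ in $H^{-2}$, hence (by interpolation and conservation of $H^{-1}$ norms under the flow) $u_n \to 0$ in $L^2([0,T],H^{-1}(M))$, and moreover $u_n \to 0$ in $L^2([0,T],L^2(\omega))$. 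Then Theorem \ref{thmpropagM} applies (with $f_n = 0$), giving $u_n \to 0$ in $L^2_{loc}([0,T],L^2(M))$, which contradicts $\|u_{n,0}\|_{L^2}=1$.

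The second step is to show that the space
\begin{equation*}
N_T = \{u_0 \in L^2(M) \ |\ e^{it\Delta}u_0 = 0 \text{ on } ]0,T[\times \omega\}
\end{equation*}
is trivial. Exactly as in dimension $1$, one verifies using $u_\varepsilon = (e^{i\varepsilon\Delta}u_0 - u_0)/\varepsilon$ together with the weak estimate that $N_T$ is stable under $\Delta$ and consists of smooth functions; a second application of the weak estimate bounds $\|\Delta u_0\|_{L^2}$ by $C\|u_0\|_{L^2}$ for $u_0 \in N_T$, so the unit ball of $N_T$ is relatively compact in $L^2$ and $N_T$ is finite-dimensional by Riesz. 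Hence $\Delta$ admits an eigenfunction $u_\lambda \in N_T$ satisfying $\Delta u_\lambda = \lambda u_\lambda$ with $u_\lambda \equiv 0$ on $\omega$. Here is the only place where the argument must genuinely differ from the $1$D case: in place of Cauchy-Lipschitz, I invoke the standard unique continuation theorem for second order elliptic operators with smooth coefficients (Aronszajn-Cordes, see \cite{Zuilybook}), which is valid for any non-empty open $\omega$ and forces $u_\lambda \equiv 0$. Thus $N_T = \{0\}$.

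The third step upgrades the weak estimate to the full observability by another contradiction argument: if $u_{n,0}$ is a normalized sequence violating the strong estimate, weak compactness in $L^2$ yields a limit $u_0$ whose evolution vanishes on $]0,T[\times \omega$, so $u_0 \in N_T = \{0\}$; then Rellich gives $\|u_{n,0}\|_{H^{-2}} \to 0$, and the weak estimate provides the desired contradiction with $\|u_{n,0}\|_{L^2}=1$. The principal obstacle in this whole scheme is not in this proof itself but in the two ingredients one imports: Theorem \ref{thmpropagM} (whose microlocal proof under GCC is the substantive content) and the elliptic unique continuation theorem. Once these are in hand, the structure of the $1$D proof transfers verbatim.
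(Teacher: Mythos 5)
Your proposal is correct and follows exactly the route the paper takes: it sketches the same reduction, noting that the only changes to the one-dimensional argument are substituting Theorem \ref{thmpropagM} for Theorem \ref{thmpropagation3} in the compactness--uniqueness step and replacing Cauchy--Lipschitz by elliptic unique continuation for $\Delta u_\lambda = \lambda u_\lambda$ (valid for any non-empty $\omega$, see \cite{Zuilybook}) when showing $N_T=\{0\}$. Your write-up is simply a more detailed rendering of the paper's own two-line proof.
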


 This result of controllability was first proved by G. Lebeau \cite{control-lin1} in the more complicated case of a domain with boundary.  He proved the boundary controllability of the Schrödinger equation under the Geometric Control Condition similar to Assumption \ref{hypoGCC}, where the geodesics have to be replaced by generalised rays of geometric optics bouncing on the boundary. Note also that some other results assume stronger geometric assumptions than Geometric Control Condition but less regularity on the coefficients and may then give more explicit results using for instance multiplier techniques (see for instance Zuazua \cite{ZuazSchrodLions}, Machtyngier \cite{Machtyngier} or Lasiecka-Triggiani \cite{LasiTrigSchrod}) or Carleman estimates (see for instance Lasiecka-Triggiani-Zhang \cite{LasieckaTrigH1}).
  
The Geometric Control Condition is known to be necessary and sufficient for the controllability of the wave equation since the results of Bardos-Lebeau-Rauch \cite{BLR} (note that the necessary part can bring subtelties if we choose $1_{\omega}$ instead of a smooth function $\chi_{\omega}$, see \cite{LebeauControlHyp} for the example of the control from half of the sphere). Actually, for any geodesic, it is possible to construct a sequence of solutions of the wave equation that concentrates on this ray. (see Ralston \cite{RalstonWave} with a geometric optic solution or Burq-Gérard \cite{BurqGerardCNS} using microlocal defect measure). For the Schrödinger equation, this kind of construction is, in general, only possible for some small times of the order of $h_n$ (or better the Ehrenfest time $h_n |\log(h_n)|$) for some data oscillating at scale $h_n\rightarrow 0$, which is not sufficient to contradict observability. 
  
  However, for some specific stable trajectories, the previous kind of construction is sometime possible. For instance, Ralston \cite{Ralston} constructed some approximate eigenfunctions (which can of course be translated into solutions of the Schrödinger equation) that concentrate on a stable periodic trajectory satisfying additional assumptions. By stable, we mean that the application of first return of Poincaré has only some eigenvalues of modulus $1$ which are distincts. This stability occurs in particular in the case of positive curvature. This yields us to conclude that any $\omega$ has to meet this specific trajectory if we want to get observability, making the Geometric Control Condition necessary for this trajectory. This stability assumption is quite natural because we expect that a stable periodic trajectory will prevent the dispersion. Indeed, a high frequency particle traveling on such trajectory might remain concentrated close to the trajectory.
  
  For instance, on the sphere $S^2$, the geodesics are the equators, which are stable. There exist explicit quasimodes that concentrate on any such meridian. Namely, writing $S^2\subset  \R^3_{(x,y,z)}$, we consider the sequence of normalised eigenfunctions $u_n=c_n(x+iy)^n$, i.e. $\nor{u_n}{L^2(S^2)}=1$ with $c_n\approx \sqrt{n}$ (see \cite{GallotHulinLaf} Section 4.E.3 for a description of the eigenfunctions of the sphere as the restriction of harmonic homogeneous polynomials of $\R^3$). Since $|u_n|^2=c_n^2(x^2+y^2)^{n}=c_n^2(1-z^2)^n$, this sequence concentrates exponentially on the equator $\left\{z=0\right\}$, contradicting the observability if $\omega$ does not intersect the equator. The Geometric Control Condition is therefore necessary and sufficient on the sphere. Note that in the case of the ball with Dirichlet boundary conditions for instance, there exist also explicit eigenfunctions that are known to concentrate on the boundary so that the control region $\omega$ has to touch the boundary (see Lagnese \cite{Lagnese} Lemma 3.1 where it is stated for the wave equation).
  
Yet, if there are some unstable trajectory, the situation may be more complicated because of dispersion. 

The result of Jaffard \cite{Jaffard}, extended by Komornik \cite{Komornik} to higher dimensions, proves, for the torus $\mathbb{T}^d$, that any non empty open set is enough to obtain the controllability in any time $T>0$ (see also the previous result of Haraux \cite{haraux} when $\omega$ is a strip in a rectangle and the article of Tenenbaum-Tucsnak \cite{Tenenbaum} where they obtain estimates for the boundary control problem on rectangles). Yet, it can happen that such open set does not satisfy the Geometric Control Condition. Note also that a different proof of the same result was given by Burq and Zworski \cite{Burq} and allows to give other examples of domains where Geometric Control Condition is not necessary, as the Bunimovitch stadium. We give a very elementary and general proof of Burq (see \cite{Burq}) that a strip is sufficient for the torus $\mathbb{T}^2$, that is the result of Haraux \cite{haraux}. 
\begin{theorem}[Burq]Let $M_1$, $M_2$ be two compact manifolds (possibly with boundary). Let $\omega_1\subset M_1$ that satisfies an observability estimate like (\ref{inegobservgener}) in time $T$.\\
Then, the same result as Theorem \ref{thmcontrollin} holds true (i.e. controllability in $L^2$) for $\omega=\omega_1\times M_2$.
\end{theorem}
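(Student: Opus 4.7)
The strategy is to decouple the two variables by Fourier expansion in $M_2$, reducing the observability on $M=M_1\times M_2$ to countably many copies of the observability on $M_1$, which is given by hypothesis.

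Let $(e_k)_{k\geq 0}$ be an orthonormal basis of $L^2(M_2)$ consisting of eigenfunctions of $-\Delta_{M_2}$ with eigenvalues $\mu_k^2\geq 0$. Any initial datum $v_0\in L^2(M)$ expands as $v_0(x_1,x_2)=\sum_k v_{0,k}(x_1)\,e_k(x_2)$ with $v_{0,k}\in L^2(M_1)$ and $\|v_0\|_{L^2(M)}^2=\sum_k \|v_{0,k}\|_{L^2(M_1)}^2$. Since $\Delta_M=\Delta_{M_1}+\Delta_{M_2}$ acts diagonally on this decomposition, the free solution $v=e^{it\Delta_M}v_0$ is
\begin{equation*}
v(t,x_1,x_2)=\sum_{k}e^{-it\mu_k^2}\bigl(e^{it\Delta_{M_1}}v_{0,k}\bigr)(x_1)\,e_k(x_2).
\end{equation*}

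Next, I would compute $\|1_\omega v\|_{L^2(M)}^2$ using that $1_\omega(x_1,x_2)=1_{\omega_1}(x_1)$ does not involve $x_2$, so applying Parseval in the $x_2$ variable kills the cross terms and also cancels the unimodular phases $e^{-it\mu_k^2}$. One obtains
\begin{equation*}
\int_{M}|1_\omega v(t,\cdot)|^2\,dx
=\sum_{k}\int_{M_1}\bigl|1_{\omega_1}(x_1)\,(e^{it\Delta_{M_1}}v_{0,k})(x_1)\bigr|^2\,dx_1.
\end{equation*}
Integrating over $[0,T]$ and using the assumed observability estimate on $(M_1,\omega_1)$ applied mode by mode yields
\begin{equation*}
\int_0^T\!\!\int_M |1_\omega v|^2\,dx\,dt
=\sum_k \int_0^T\!\!\int_{M_1}|1_{\omega_1} e^{it\Delta_{M_1}}v_{0,k}|^2\,dx_1\,dt
\geq \frac{1}{C_T}\sum_k\|v_{0,k}\|_{L^2(M_1)}^2=\frac{1}{C_T}\|v_0\|_{L^2(M)}^2,
\end{equation*}
which is exactly observability (\ref{inegobservgener}) for $(M,\omega)$ in time $T$. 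The exact controllability in $L^2(M)$ then follows by the same HUM duality as in Theorem \ref{thmcontrollin}.

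There is no real obstacle here: the entire argument rests on the remark that the observability constant $C_T$ on $M_1$ does not depend on the mode $k$, because the transverse dynamics in $M_2$ contributes only the pointwise-in-time phases $e^{-it\mu_k^2}$ of modulus one, which drop out after taking absolute values. The closest thing to a subtle point is to make sure that the Fourier expansion in $M_2$ commutes with $1_\omega$ and with the time integration, which is immediate from $1_\omega=1_{\omega_1}\otimes 1$ and Fubini/Parseval.
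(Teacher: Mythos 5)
Your proof is correct and follows essentially the same route as the paper: expand in the eigenbasis of $\Delta_{M_2}$, note that the transverse evolution contributes only unimodular phases that disappear under Parseval since $1_\omega=1_{\omega_1}\otimes 1$, apply the observability on $M_1$ mode by mode with the uniform constant $C_T$, and sum. No gaps.
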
    
\bnp
For fixed $x_1$, we decompose $u_0$ according to the eigenfunctions $\varphi_k$ (with their respective eigenvalues $\lambda_k$) of $\Delta_2$ on $M_2$, where $c_k$ are functions in $L^2(M_1)$
\bna
u_0(x_1,x_2)=\sum_{k}c_k(x_1)\varphi_k(x_2).
\ena
This gives 
\bna
\left[e^{it\Delta}u_0\right](x_1,x_2)=\sum_k \left[e^{it\Delta_1}c_k\right](x_1) e^{it\lambda_k} \varphi_k(x_2).
\ena
By Plancherel formula, the respective $L^2$ norms can be written
\bna
\nor{u_0}{L^2(M_1\times M_2)}^2=\sum_k\nor{c_k}{L^2(M_1)}^2\\
\nor{e^{it\Delta}u_0}{L^2(\omega_1\times M_2)}^2=\sum_k\nor{e^{it\Delta_1}c_k}{L^2(\omega_1)}^2
\ena
For each $k\in \N$, the observability estimate on $\omega_1$ gives  
\bna
\nor{c_k}{L^2(M_1)}^2\leq C_T\int_0^T\nor{e^{it\Delta_1}c_k}{L^2(\omega_1)}^2.
\ena
By summing up, we get the expected estimate
\bna
\nor{u_0}{L^2(M_1\times M_2)}^2\leq C_T \int_0^T\nor{e^{it\Delta}u_0}{L^2(\omega_1\times M_2)}^2.
\ena
\enp
Note also that the previous theorem can be used as a first step to give another proof of the result of Jaffard and Komornik that any open subset is enough for observability on $\mathbb{T}^d$ (see Burq-Zworski \cite{BurqZworskiJEDP}). A more precise description is also given by Mac\'ia \cite{MaciaT2} for dimension $2$ and Anantharaman-Mac\'ia \cite{AnanthaMacitore} for higher dimension, using 2-microlocal defect measures.

Therefore, it seems very temptative to make the conjecture that the good condition is that $\omega$ meets all stable trajectories. However, quite surprisingly, Colin de Verdière and Parisse \cite{ColinParisse} managed to construct some sequence of eigenfunctions concentrating logarithmically on an unstable periodic trajectory of a specific negative curved surface with boundary. 

Therefore, it seems that the global dynamic of the geodesic flow has to be taken into account. Determining the set of trajectories that can be missed by the control zone is very complicated and implies a good understanding of the global dynamic of the geodesic flow.

Since eigenfunctions are particular solutions of Schrödinger equation, the observability is strongly linked to the spreading of high energy eigenfunctions. Some progress has been recently made in this subject (in particular, in relation with the Quantum Unique Ergodicity conjecture) and it could certainly be transfered to controllability problems. This is done for instance in the work of Anantharaman and Rivière \cite{AnanRiv} in negative curvature using entropy properties. We also refer to the recent survey of Anantharaman and Mac\'ia \cite{AnanthaMaciaSurvey}.

It seems also that to permit some possible loss of derivative (that is to control only some smoother data with a less regular control) can be a natural setting in some geometries where "few" trajectories miss $\omega$. For instance, the article of Burq \cite{Burqcontrolobstacle} proves the controllability in an open set with a finite number of convex holes assuming some further assumptions. The boundary control is supported in the exterior boundary (of the big open set). Therefore, there exist some trapped trajectories going back and forth between the holes, which are very unstable. There is controllability but with loss of $\varepsilon$ derivatives, that is we can control data in $H^{\varepsilon}$ while the regularity of the control produce a priori data in $L^2$. These kind of results are very consistent with some resolvent estimates which give a loss of $\log(\lambda)$ at frequency $\lambda$ when the trapped set is a hyperbolic trajectory (see Christianson \cite{Christiansonhyperbolic}) or a "very small set" (see Nonnenmacher-Zworski \cite{NonnZworskiActa}).

\medskip

Finally, note that if we only aim the approximate controllability, we only need to prove unique continuation result for the free Schrödinger equation. It is actually the fact that an operator has a dense image if its dual is injective. More precisely, the approximate controllability is equivalent to answer:

\medskip
\begin{center}
\emph{
Let $u$ solution of $i\partial_t u+\Delta u=0$, does $u\equiv 0$ on $]0,T[\times \omega$ imply $u\equiv 0$ ?
}
\end{center}
\medskip

This happens to be true for any non empty open set $\omega$. It easily follows from Holmgren theorem for analytic metric or from more complicated unique continuation results in more general metric, see for instance Robbiano-Zuily \cite{UniqZuilyRob}. But the problem is that the approximate controllability does not give any information on the cost of the control to get close to the target. However, without geometric assumption on $\omega$, it is sometime possible to quantify the cost of this approximate control to get at distance $\e$. It appears that the cost explodes exponentially with $1/\e$, as can be deduced by duality from Phung \cite{Phung} Theorem 3.1.

\section{Links with some resolvent estimates}
\label{sectionresolv}
In the previous subsection, we have obtained the controllability of the linear Schrödinger equation from an observability estimate. It turns out that this observability estimate is equivalent to some resolvent estimates. 

In particular, we have the following theorem proved by Miller \cite{Miller}, following ideas of Burq and Zworski \cite{Burq}:
\begin{theorem}[Burq-Zworski, Miller]
\label{thmResolObs}
The system (\ref{eqncontrol}) is exactly controllable in $L^2$ in finite time if and only if there exist $M>0$, $m>0$ so that 
\bnan
\label{resolv}
\forall \lambda\in \R, \forall u\in D(-\Delta), \quad \norL{u}^2\leq M\norL{(\Delta-\lambda)u}^2+m\norL{1_{\omega}u}^2. 
\enan
\end{theorem}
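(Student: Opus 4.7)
The plan is to prove the two implications separately; both rest on relating the Fourier transform in time of the Schr\"odinger flow to the static resolvent of $\Delta$.

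For the forward direction (observability implies (\ref{resolv})), I would first upgrade the free observability inequality (\ref{inegobservgener}) to an inhomogeneous version via Duhamel: any $u$ solving $i\partial_t u+\Delta u=f$ on $[0,T]$ satisfies
$$\|u(0)\|_{L^2}^2 \le C_T\int_0^T\|1_\omega u\|_{L^2}^2\,dt + C_T\int_0^T\|f\|_{L^2}^2\,dt.$$
Given $v\in D(-\Delta)$ and $\lambda\in\R$, apply this to the explicit test function $u(t):=e^{i\lambda t}v$, which satisfies $i\partial_t u+\Delta u=e^{i\lambda t}(\Delta-\lambda)v$ and whose $L^2$-norm equals $\|v\|_{L^2}$ for every $t$. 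Plugging in and using the constancy of the norm yields (\ref{resolv}) with $M=m=C_T T$.

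For the reverse direction, assume (\ref{resolv}) and fix a cutoff $\phi\in C_c^\infty((0,T))$ to be chosen later. Given $u_0\in D(\Delta)$, set $u(t):=e^{it\Delta}u_0$ and $v(t):=\phi(t)u(t)$. Since $\phi$ depends only on $t$, a direct calculation gives $i\partial_t v+\Delta v=i\phi'(t)u(t)$, with $v$ compactly supported in $t$. Taking the Fourier transform in time yields, for every $\tau\in\R$,
$$(\Delta-\tau)\hat v(\tau)=i\,\widehat{\phi'u}(\tau)\quad\text{in }L^2(M).$$
Applying (\ref{resolv}) pointwise in $\tau$ to $\hat v(\tau)$ and integrating in $\tau$, then invoking Plancherel together with $\|u(t)\|_{L^2}=\|u_0\|_{L^2}$ for every $t$, one obtains
$$\|u_0\|_{L^2}^2\int_0^T\phi^2\,dt \le M\|u_0\|_{L^2}^2\int_0^T(\phi')^2\,dt + m\|\phi\|_{L^\infty}^2\int_0^T\|1_\omega u\|_{L^2}^2\,dt.$$
Rescaling $\phi_T(t):=\phi_1(t/T)$ for a fixed $\phi_1\in C_c^\infty((0,1))$ gives the identity $\int(\phi_T')^2/\int\phi_T^2=T^{-2}\int(\phi_1')^2/\int\phi_1^2$, which is $<1/(2M)$ once $T$ is large enough. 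Absorbing the first right-hand term into the left then produces the observability inequality in some time $T_\star>0$, and hence exact $L^2$-controllability in time $T_\star$ via the HUM duality recalled after Theorem \ref{thmobservlin}, which is precisely controllability in finite time.

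The main difficulty I anticipate is not in the algebra but in the functional-analytic setup: to apply (\ref{resolv}) pointwise in $\tau$ and to integrate the scalar bound legitimately, one needs $\hat v(\tau)\in D(\Delta)$ for almost every $\tau$ with the right integrability in $\tau$. Starting from $u_0\in D(\Delta)$ (so that $u\in C([0,T];D(\Delta))$ and $\Delta\hat v\in L^2_\tau L^2_x$) takes care of this, and the resulting observability inequality then extends to all $u_0\in L^2$ by density.
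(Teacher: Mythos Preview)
Your argument is correct and essentially coincides with the paper's: the resolvent-to-observability direction is carried out there exactly as you propose (time cutoff $\chi$, Fourier transform in $t$, apply (\ref{resolv}) at each $\tau$, Plancherel, then rescale $\chi(t)=\phi(t/T)$), while for observability-to-resolvent the paper performs the same Duhamel computation in the equivalent form $e^{it\Delta}u_0=e^{it\lambda}u_0+v(t)$ with $\|v(t)\|\le t\|(\Delta-\lambda)u_0\|$, rather than first isolating an inhomogeneous observability lemma. The only slip is in your constants: tracing the Duhamel bound gives $M$ of order $C_TT^3$ (the paper obtains $M=2C_TT^3/3$, $m=2C_TT$), not $M=C_TT$.
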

Actually, we can give an estimate on the time $T$ and cost of control $C_T$ of the observability estimate with respect to $M$ and $m$. Although, there is not a complete equivalence: the constants $M$ and $m$ can be written depending on the time and cost of control, but these two expressions are not inverse one of the other.

 Actually, the proof is very general and can be put in an abstract setting for self-adjoint operator. We follow \cite{Miller}.
\bnp
We prove the observability estimate (\ref{inegobservgener}) for a time $T>0$ (that will depend on $M$), since it is equivalent to controllability.
Let $\chi\in C^1_0(\R)$ to be specified later, $u_0\in D(-\Delta)$ and $u(t)=e^{it\Delta}u_0$. $v(t)=\chi(t)u(t)$ is solution of $i\partial_tv+\Delta v=i\dot{\chi}(t)u(t):=f(t)$. The Fourier transform of $f$ is $\hat{f}(\tau)=(-\tau+\Delta)\hat{v}(\tau)$. We apply the resolvent estimate (\ref{resolv}) to $\hat{f}(\tau)$ with $\tau=\lambda$ and get
\bna
\norL{\hat{v}(\tau)}^2\leq M\norL{\hat{f}(\tau)}^2+m \norL{1_{\omega}\hat{v}(\tau)}^2.
\ena 
After integration in $\tau$, the Plancherel formula gives 
\bna
\int_{\R}\norL{v(t)}^2dt\leq M\int_{\R}\norL{f(t)}^2dt+m \int_{\R}\norL{1_{\omega}v(t)}^2dt.
\ena 
Recalling the expression of $v$ and $f$ gives
\bna
\int_{\R}(\chi(t)^2-M\dot{\chi}(t)^2)\norL{u(t)}^2dt\leq m \int_{\R}\chi(t)^2\norL{1_{\omega}u(t)}^2dt.
\ena
Now, we specify $\chi(t)=\phi(t/T)$ for $\phi\in C^{\infty}(]0,1[)$ not zero.
By the conservation of $L^2$ norm, we get 
\bna
I_T\norL{u_0}^2\leq m \nor{\phi}{L^\infty}\int_0^T\norL{1_{\omega}u(t)}^2dt
\ena
with $I_T=\int_0^T(\chi(t)^2-M\dot{\chi}(t)^2)=\frac{\norL{\phi}^2}{T}\left(T^2-M\frac{\norL{\dot{\phi}}^2}{\norL{\phi}^2}\right)$. This expression can be made positive for $T$ large enough. Actually, by optimizing $\phi\in C^{\infty}_0(]0,1[)$, i.e. $\min_{\phi\in C^{\infty}_0(]0,1[)}\norL{\dot{\phi}}/\norL{\phi}=\pi$ (which is obtained with some sequence converging to $\sin(\pi t)$), it is possible to obtain the controllability for a time $T>\pi\sqrt{M}$ with a constant $C_T=2mT/(T^2-M\pi^2)$ in (\ref{inegobservgener}) (see Theorem 5.1 of \cite{Miller}).

Now, let us prove the converse result: observability implies resolvent estimate.\\
Denote $v(t)=\left(e^{it\Delta}-e^{it\lambda }\right)u_0$ solution of 
\bna
\dot{v}(t)=i\Delta e^{it\Delta}u_0-i\lambda e^{it\lambda }u_0=i(\Delta-\lambda)e^{it\Delta}u_0+i\lambda (e^{it\Delta}-e^{it\lambda })u_0:=e^{it\Delta}f+i\lambda v(t)
\ena
where $f=i(\Delta-\lambda)u_0$. So, since $v(0)=0$, we have $v(t)=\int_0^t e^{i(t-s)\lambda}e^{is\Delta} f~ds$ and therefore, $\norL{1_{\omega}v(t)}\leq \norL{v(t)}\leq t\norL{f}$.

We apply the observability estimate to $u_0$
\bna
\norL{u_0}^2&\leq &C_T\int_0^T \norL{1_\omega e^{it\Delta}u_0}^2\leq 2C_T\int_0^T \norL{1_\omega v(t)}^2+2C_T\int_0^T \norL{1_\omega e^{it\lambda}u_0}^2\\
& \leq &C_T \frac{2T^3}{3}\norL{(\Delta-\lambda)u_0}^2+2C_T T\norL{1_{\omega}u_0}^2,
\ena
this gives the result with $M=2C_T T^3/3$ and $m=2C_T T$.
\enp
\begin{remark}
\label{rmkreslarge}
Note that estimate (\ref{resolv}) could, in principle, never be used directly to prove controllability in arbitrary short time, since it would require $M$ to be arbitrary small with a large $m$ eventually. This is not possible because it would for instance imply $\norL{u}\leq \e \norL{\Delta u}$ for any $u\in C^{\infty}_0(\omega^c)$ and $\e>0$. However, it is possible sometimes to prove (\ref{resolv}) with $M$ arbitrary small, but for some $\lambda$, $|\lambda|\geq R_0$, depending on $M$. The following property can be used:

\medskip

Assuming (\ref{resolv}) holds for $|\lambda|\geq R_0$ and $1_{\omega}\varphi=0$ implies $\varphi=0$ for any $\varphi$ eigenvalue of $\Delta$, then the Schrödinger equation is controllable in time $T>\pi\sqrt{ M}$.

\medskip

This is Property 6.6.4 from the book of Tucsnak-Weiss \cite{TucsnakWeiss}. It is obtained by showing that, for $v$ spectrally localised at high frequency (depending on $M$ and $R_0$), the resolvent estimate (\ref{resolv}) is automatically true for $|\lambda|\leq R_0$ by basic spectral inequalities. Since the assumption gives it for $|\lambda|\leq R_0$, we get the resolvent estimate for any $\lambda\in \R$ and for $v$ localised at high frequency. This gives controllability for data at high frequency. Since the controllability is true for the remaining finite dimensional subspace of data spectrally localised at low frequency, the global controllability can be obtained by a theorem of simultaneous controllability (Theorem 6.4.2 of \cite{TucsnakWeiss}).  The second assumption of uniqueness for eigenfunctions is always true for any $\omega \neq \emptyset$ by unique continuation for elliptic operators of order $2$, but we have chosen to give it in an abstract setting.  
\end{remark}
The point of view of controllability through resolvent estimates can be very useful for various reasons:
\begin{itemize}
\item Their proof can be easier than direct observability. For instance, if we use microlocal arguments, we can use some measures which do not depend on time, and which are semiclassic. Moreover, in \cite{Burq}, the authors developed a strategy which allows to use existing resolvent estimates as a black-box to get others which could be useful in other situation. Roughly speaking, if locally the geometric situation is the same as in another geometric setting where you know some resolvent estimates, you can use them as a black-box.
 
\item To make proofs of controllability by the resolvent can be easier to make the link between the observability of the contiuous system and the observability of a discretised system coming from numerical analysis. This approach was used first by Ervedoza-Zheng-Zuazua \cite{ErvZhenZuaz}, see also Miller \cite{MillerResolapprox} for later improvements and references.

\item It can give informations about the cost of the control when the time of control goes to zero, especially when we only aim at controlling the "high frequency" part of the function. In that case, some resolvent estimates are only needed for large $\lambda$, see Miller \cite{Miller}.

\item It can make some links between controllability of different equations. For example, it gives a very simple proof that the controllability of the wave equation implies the controllability of the Schrödinger equation in arbitrary small time. Indeed, the proof of Theorem \ref{thmResolObs} is very general and can be applied to any self-adjoint operator $A$ and control operator $B$ (bounded or admissible see \cite{Miller} or \cite{RamdaTaka}). In particular, if we apply it to the wave operator $A=\left(\begin{array}{cc}
0& Id\\ \Delta & 0 \end{array}\right)$ with a control operator $B=\left(\begin{array}{c}
0\\ 1_{\omega} \end{array}\right)$, we see that the controllability of the wave equation in $H^1\times L^2$ is equivalent to the following resolvent estimate
\bnan
\label{resolvwaveA}
&\forall \lambda\in \R, \forall (u_0,u_1)\in D(A),& \nonumber\\
& \nor{u_0}{H^1}^2+\norL{u_1}^2\leq M_2\left(\nor{u_1-\lambda u_0}{H^1}^2+\norL{\Delta u_0-\lambda u_1}^2\right)+m_2\norL{1_{\omega}u_1}^2. &
\enan
By taking $u_1=\lambda u_0=\lambda u$, we get a resolvent estimate for $\Delta$
\bnan
\label{resolvwave}
\forall \lambda\in \R, \forall u\in D(-\Delta), \quad \norL{\lambda	u}^2\leq M_2\norL{(\Delta-\lambda^2)u}^2+m_2\norL{\lambda1_{\omega}u}^2. 
\enan
We immediatly get that $(\ref{resolvwave})$ implies the resolvent estimate (\ref{resolv}) with $M$ arbitrary small and $\lambda>R_0$ with $R_0=R_0(M)$ large enough. But since $-\Delta$ is positive, the same result is also true for $\lambda<-R_0$ with $R_0$ large enough, by basic spectral theory estimates. This gives (\ref{resolv}) uniformly for $|\lambda|\geq R_0$ with a fixed small $M$ and eventually large $m$. By using Remark \ref{rmkreslarge} and unique continuation for eigenfunctions of $\Delta$ (which is actually a consequence of (\ref{resolvwave}) for $\lambda \neq 0$), we get the general case. Indeed we have proved that the controllability of the wave equation implies (\ref{resolv}) and therefore the controllability of the Schrödinger equation. Note that this implication can also be proved by the so-called "transmutation method" (see Phung \cite{Phung} and Miller \cite{Miller, Millerviolent}) which writes a solution to the Schrödinger equation as an integral kernel using the solution of the wave equation. This method seems more precise to estimate the cost of controllability when $T$ is small. 

Therefore, Theorem \ref{thmcontrollinM} can be deduced directly from the related result of Bardos-Lebeau-Rauch \cite{BLR} for the wave equation.

Note also that it is possible to prove the equivalence between (\ref{resolvwave}) and (\ref{resolvwaveA}), see Yamamoto-Zhou \cite{YamamZhouHautus}, Ramdani-Takahashi-Tenenbaum-Tucsnak \cite{RamdaTaka} by using the equivalence to observability of "wave-packets" or Miller \cite{MillerResolapprox} with a link with the resolvent estimates for $\sqrt{-\Delta}$.

\medskip
 
Note also that, quite surprisingly, Duyckaerts-Miller \cite{DuyckMillerHeat} showed that the controllability of the Schrödinger equation does not necessarily imply the controllability of the heat equation, even if it is the case in many geometric situations.

\end{itemize}

\section{The semilinear equation}
\label{sectionsemilin}
In this part, we aim at giving a short overview of the techniques and problems for the control and stabilization of semilinear Schrödinger equations. 
\subsection{The Nonlinear Schrödinger Equation}
In this section, we will discuss semilinear Schrödinger equations of the form 
$$i\partial_t u+\Delta u =f(|u|^2)u.$$
They can arise in various physical problems as Bose-Einstein condentate, propagation of wave enveloppe in nonlinear optic or non linear propagation (as tsunami) etc. The global well-posedness of this equation is a complicated topic and depends considerably on the nonlinearity $f$ and on the spatial domain where the equation is considered. We refer the reader to the expository books of Cazenave \cite{bookCazenave} or Tao \cite{Tao} which are good introductions to the functional spaces that are usually used for these problems, namely the Strichartz and Bourgain spaces. Note also that there is a strong relation (still not completely understood) between the dispersive properties of the free Schrödinger equation and the geometry of the geodesic flow. 
 
If $f$ is a real valued function, two quantities are formally conserved by the equation:
\bna
&\nor{u}{L^2},& \textnormal{ the mass, }\\
E(u)&=\int |\nabla u|^2~dx+\int F(|u|^2)~dx,&\textnormal{ the $H^1$ energy, }
\ena
where $F$ is a primitive of $f$.

These quantities will be crucial to prove some stabilisation results. Note that the sign of $F$ will be important in the case where the nonlinear equation can only be solved in $H^1$.  
\subsection{General strategy}
\subsubsection{Local controllability}
Concerning the internal controllability, some local results can, in general, be obtained from the linear result by a perturbation argument. The idea is mainly using a fixed point argument in the functional setting inherited from the wellposedness result. This was first done for the nonlinear wave equation by Zuazua \cite{zuazua} and in Dehman-Lebeau-Zuazua \cite{DLZstabNLW} for stronger nonlinearities. This is also done for Nonlinear Schrödinger equations, using Strichartz estimates in Gérard-Dehman-Lebeau \cite{control-nl} or Bourgain spaces in Rosier-Zhang \cite{RosierZhang} and Laurent \cite{LaurentNLSdim1,LaurentNLSdim3}. 

More precisely, for the control to $0$, the strategy is the following. We want to find a solution of
\bneqn
\label{controleqnonlin}
i\partial_t u+\Delta u&=&f(|u|^2)u+1_{\omega}g\\
u(0)&=&u_0
\eneqn 
satisfying $u(T)=0$.

We will look for $g$ of the form $1_{\omega}\Phi$ where $\Phi=e^{it\Delta }\Phi_0$ is solution of (\ref{eqlinphi}). We split $u=v+\Psi$ where $v$ contains the nonlinear part and $\Psi$ contains the control:
\begin{eqnarray}
\label {eqlinbis}
\left\lbrace
\begin{array}{rcl}
i\partial_t \Psi + \Delta \Psi &=&1_{\omega} \Phi\\
\Psi(T)&=&0
\end{array}
\right.
\end{eqnarray}
and so
\begin{eqnarray}
\label {eqnonlincontrol}
\left\lbrace
\begin{array}{rcl}
i\partial_t v +  \Delta v &=&f(|u|^2)u\\
v(T)&=&0
\end{array}
\right.
\end{eqnarray}
We notice that all the system only depends on $\Phi_0$. We denote $L\Phi_0=u(0)=\Psi(0)+v(0)=S\Phi_0+K\Phi_0$, where $S$ is again the linear inversible HUM operator defined in (\ref{eqlinphi}) and (\ref{eqlin}) and $K$ is a nonlinear operator. We are looking for $\Phi_0$ such that $L\Phi_0=u_0$, that is $\Phi_0=S^{-1}u_0-S^{-1}K\Phi_0:=B\Phi_0$. So, the main task is to find a fixed point for $B$ by showing that it is contracting on a sufficiently small ball. This can be achieved if $u_0$ is small and using many boot strap arguments showing that if $u_0$ and $\Phi_0$ are small, the solutions $\Psi$, $u$, and $v$ will remain small in the functional space adapted to the nonlinearity. The difficulties come mainly from the nonlinear estimates that  are required.

Note that the control from the boundary is for the moment quite less studied in the nonlinear framework, mainly because the Cauchy problem for nonhomogeneous boundary conditions is less understood. We can cite the work of Rosier-Zhang \cite{RosierZhangRectangle} on rectangles and also \cite{RosierZhang_boundary} by the same authors which takes advantage of the dispersion in $\R^d$ for a control through all the boundary.

\subsubsection{Global controllability}

Obtaining the controllability for large data is in general much more subtle.

First, we can expect to get the result for arbitrary short time if the nonlinearity is not too large, as globally Lipschitz (or log-Lipschitz as in \cite{FernandZuazheat} for the nonlinear heat equation). This strategy was quite well described in the review article of Zuazua \cite{ZuazuasurveySchrod}. Up to the knowledge of the author, it is still not proved.  

If the nonlinearity is not globally Lipschitz, and if $\omega$ is not the whole space, there is no available result of control in arbitrary small time, unlike the linear case. The most common strategy is the one by stabilization and local control. It was applied by Dehman-Lebeau-Zuazua \cite{control-nl} for compact surfaces using Strichartz estimates and by the author \cite{LaurentNLSdim1,LaurentNLSdim3} in some contexts where Bourgain spaces are needed, as in dimension $3$ and in dimension $1$ at the $L^2$ regularity. The idea is to find a good stabilizing term to bring the solution close to zero. During that time, we take as a control the stabilization term given by the stabilized equation. By combining the previous construction with a local controllability near zero, we obtain the global controllability to zero for large data. Additionaly, we notice that the backward equation $i\partial_t u-\Delta u=-f(|u|^2)u$ fulfills exactly the same conditions for controlling to zero. The same reasonning as before allows to get control to zero for this backward equation. By reversing the time, it gives a control to get from zero to our expected final state. Combining these both results gives the global controllability in large time. This strategy is illustrated in Figure \ref{fig.strategy} where the term energy is either the $L^2$ norm or the $H^1$ energy. 
\begin{figure}[!h]
$$\ecriture{\includegraphics[width=0.7\textwidth]{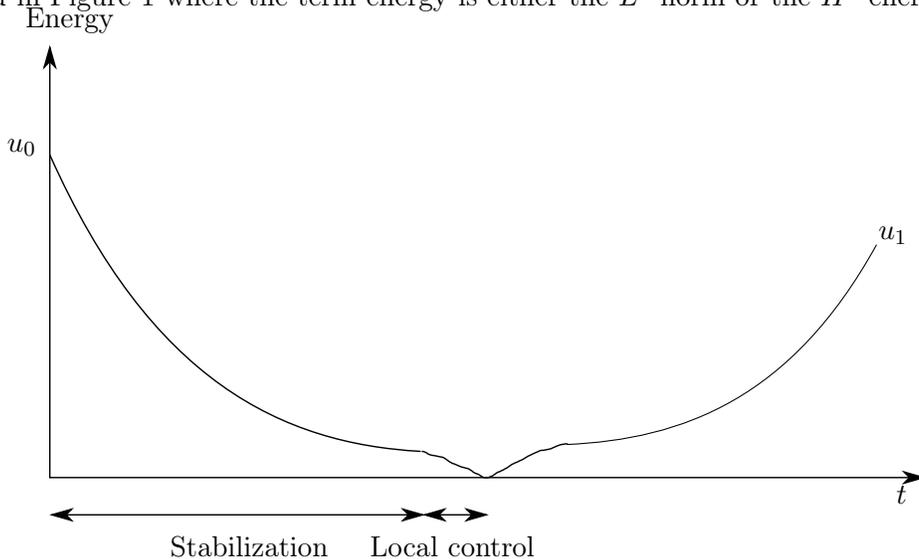}}
{\aat{0}{28}{Energy}\aat{47}{16}{$u_1$}\aat{-1}{21}{$u_0$}\aat{8}{-2}{Stabilization}\aat{19}{-2}{Local control}\aat{48}{1}{$t$}}$$ \caption{Global strategy by stabilization}\label{fig.strategy}
\end{figure}

The difficulty is to prove that the stabilization is indeed effective. To be more concrete, we consider the example of the $1$-dimensional torus treated by the author in \cite{LaurentNLSdim1}, where it is possible to solve the cubic nonlinear equation in $L^2$. The aim is to stabilize the equation in $L^2$. Actually, the fact to consider $L^2$ solutions requires using Bourgain spaces (see \cite{Bourgain}), which are functional spaces specially designed to contain the dispersive properties of the Schrödinger operator. Since this is not the topic of this survey, we will not precise the functional spaces, but all the existence and propagation (of compactness and regularity) have to be stated in these spaces.
 
A natural damping for the $L^2$ norm leads to the following system
\bneq
i\partial_t u+\Delta u +i\chi_{\omega}(x)^2 u&=& \pm |u|^2u\\
u(0)&=&u_0\in L^2.
\eneq
where $\chi_{\omega}$ is a smooth function supported in $\omega$.

So, we have the decay estimate
\bna
\nor{u(T)}{L^2}^2=\nor{u_0}{L^2}^2-2\int_0^T \nor{\chi_{\omega}u(t)}{L^2}^2dt.
\ena
To obtain an exponential decay of the type $\nor{u(t)}{L^2}\leq Ce^{-\gamma t}$, it is sufficient to prove the observability estimate 
\bnan
\left\|u_0 \right\|^2_{L^2} \leq C \intT \left\| \chi_{\omega}(x)u \right\|^2_{L^2} dt
\enan
for some bounded $u_0$. This means that at each step $[0,T]$, a certain proportion of the energy is "burnt". 

A possible proof for such result is the compactness-uniqueness argument similar to the one performed in the linear case as in the proof of Theorem \ref{thmobservlin}. 
We argue by contradiction and assume that there exists a bounded sequence of solutions satisfying
\bnan
 \intT \left\| \chi_{\omega}(x)u_n \right\|^2_{L^2} dt\leq \frac{1}{n}\left\|u_{n,0} \right\|^2_{L^2}.
\enan
This time, since the equation is nonlinear, we have to distinguish two cases (up to a subsequence): Let $\alpha_n =\left\|u_{n,0} \right\|_{L^2}\rightarrow \alpha $, then $\alpha>0$ or $\alpha=0$.

First case $\alpha>0$:
\begin{itemize}
\item We denote $u$ a weak limit of $u_n$. We apply a propagation of compactness, similar to Theorem \ref{thmpropagation3}, to prove that the convergence is actually strong in $L^2_{loc}([0,T],L^2)$. An additional difficulty is arised by the fact that the source term $f_n= \pm |u_n|^2u_n$ lies in some complicated functional spaces as the Bourgain spaces, which imposes a modification in the functional framework of Theorem \ref{thmpropagation3}. Then, the strong convergence in $L^2_{loc}([0,T],L^2)$ can be easily transfered to $C([0,T],L^2)$ using that the flow map is Lipschitz. 

Note that in other situations (this does not happen in $L^2$ for dimension $1$), the sequence of solutions can be proved to be linearizable: some compactness results allow to prove that the source term is strongly convergent in a suitable space which shows that $u_n-v_n$ is strongly convergent in the natural functional space, where $v_n$ is solution of the linear equation for the initial data. Then, it is possible to apply propagation of compactness for the linear equation.
\item The argument of uniqueness using the linear space $N_T$ does not work since the equation is nonlinear. So, it remains to prove the following unique continuation result:

\emph{The only solution in $L^2$ of
\bneqn
\label{eqnuniq}
i\partial_t u+\Delta u&=&\pm |u|^2u\\
u&=& 0 \textnormal{ on } [0,T]\times \omega
\eneqn
is $u\equiv 0$.
}

A first step for showing such result is the propagation of regularity similar to Theorem \ref{thmpropagreg} (but in the setting of Bourgain spaces) which would allow us to reduce the unique continuation result to smooth functions. But even for smooth functions, this unique continuation result possesses some big difficulties in high dimensions. They are described in Subsection \ref{subsectuniq}. 

\item The strong convergence of $u_n$ to $u=0$ in $C([0,T],L^2)$ contradicts the fact that $\alpha>0$. 
\end{itemize}
Second case $\alpha=0$:

If we do the change of unknown $w_n=u_n/\alpha_n$ which satisfies $\norL{w_n(0)}=1$, this should solve 
\bna
i\partial_t w_n+\Delta w_n+i\chi_{\omega}^2w_n=\pm \alpha_n^2|w_n|^2w_n.
\ena
A boot strap argument allows to take advantage of the smallness of $\alpha_n^2$ in front of the nonlinearity to conclude that $\alpha_n^2|w_n|^2w_n$ converges to $0$ in the Bourgain space and the solution is almost linear. The solution $w_n$ still satisfies the local convergence on $\omega$ 
\bna
 \intT \left\| \chi_{\omega}(x)w_n \right\|^2_{L^2} dt\leq \frac{1}{n}.
\ena
We can then conclude easily as in the linear case and get a contradition to $\norL{w_n(0)}=1$.

\medskip

In some more complicated geometries, see \cite{control-nl} \cite{LaurentNLSdim3}, this strategy allows to prove some global stabilization and controllability results under the two conditions 
\begin{enumerate}
\item $\omega$ fulfills the Geometric Control Condition.
\item $\omega$ fulfills some unique continuation property similar to (\ref{eqnuniq}). We refer to subsection \ref{subsectuniq} for some further comments.\end{enumerate}

\bigskip

Another strategy that was proposed by the author \cite{LaurentNLSdim3} is by successive controls close to trajectories.
The idea is still to find some control that make the solution tends to zero, but this time, we use successive controls near free trajectories. We prove that there exists a fixed $\e$ such that for any free trajectory leading $\tilde{u}_0$ to $\tilde{u}_1$, we can control $\tilde{u}_0$ to a final state $u_f$ with $\nor{u_f-\tilde{u}_1}{E}\leq \varepsilon$, where $E$ is an "energy". Since the energy is conserved for each free trajectory, we can choose $u_f$ so that $\nor{u_f}{E}\leq \nor{\tilde{u}_0}{E}-\varepsilon$. The energy is then decreasing at each step and we obtain a control to $0$. By (almost) reversibility, we can do the same process to go from zero to the expected final state. This strategy is illustrated on figure \ref{fig.strategy_successifs}. We have simplified a little the exposition because the conserved nonlinear $H^1$ energy is not exactly the $H^1$ norm for the nonlinear Schrödiner equation. In this scheme of proof, the difficulty is to show the local controllability near free trajectories. Moreover, if we want the strategy to work, we need an uniform $\e$ for all the trajectories in a ball of $H^1$, that is with a weak regularity. In particular, we need to get some observability estimates uniform in the norm of the potentials $V_1$ and $V_2$ for solutions $u$ of
\bna
i\partial_t u+\Delta u+V_1u+V_2\overline{u}=0.
\ena
Since $V_1$ and $V_2$ come from the linearization of an arbitrary bounded trajectory, we can not assume any additional regularity. This fact generates a lot of complications for the propagation of compactness and regularity and the unique continuation. Yet, it can give some additional informations like the fact that the reachable set in any fixed time is open and the smallness assumption for local controllability is only necessary in some lower order norms than the energy norm.
\begin{figure}[!h]
$$\ecriture{\includegraphics[width=0.7\textwidth]{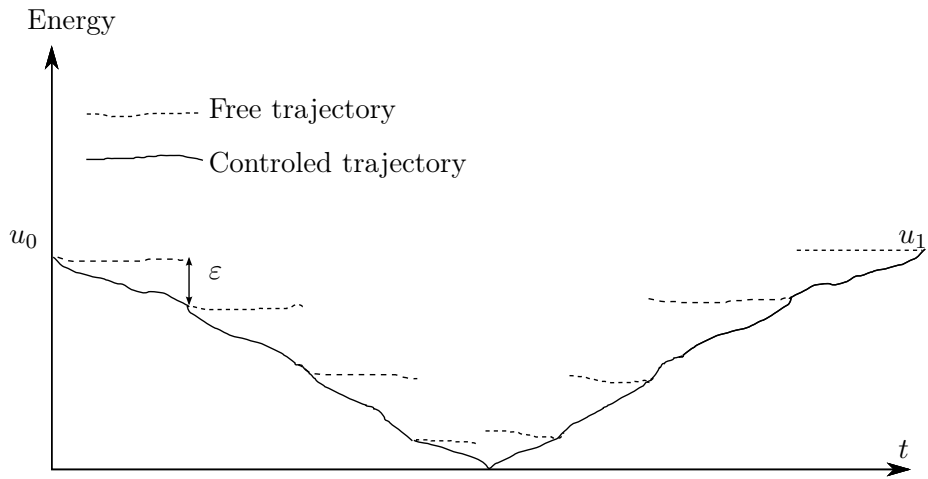}}
{\aat{0}{25}{Energy}\aat{48}{13}{$u_1$}\aat{-1}{13}{$u_0$}\aat{10}{20}{Free trajectory}\aat{10}{17}{Controled trajectory}\aat{48}{1}{$t$}\aat{10}{11}{$\varepsilon$}}$$ \caption{Global strategy by using succesive controls}\label{fig.strategy_successifs}
\end{figure}
\subsection{Unique continuation}
\label{subsectuniq}

In this subsection, we describe shortly the difficulties in proving the unique continuation property (\ref{eqnuniq}).

One of the main tools to prove unique continuation is Carleman estimates. In the case of Schrödinger equation, they allow to prove some local unique continuation property across some hypersurface:

\medskip
\emph{
Let $u$ be a solution of $i\partial_t u +\Delta u+V_1u+V_2\bar{u}=0$ such that locally $u(t,x)=0$ for $\varphi(x)\geq 0$, $t\in [0,T]$, near to a point $x_0$ where $\varphi(x_0)=0$,  then $u=0$ near $x_0$.
}
\medskip

 However, unlike the elliptic or parabolic case, $\varphi$ has to fulfill some geometric conditions, mainly $Hess ~\varphi > 0$. This condition is closely related to the strict pseudoconvexity condition which is necessary to get Carleman estimates (see Zuily \cite{Zuilybook}). However, since the Schrödinger operator is anisotropic, in our setting, pseudoconvexity needs only to be taken in the spatial variable, as proved more generally by Isakov \cite{Isakov} (see also \cite{LaurentNLSdim3} and \cite{LasieckaTrigH1} for some explicit computation). Given an open set $\omega$, the construction of the functions $\varphi$ that would allow to produce a global unique continuation result as (\ref{eqnuniq}) is not trivial. It is very restrictive with respect to the zone of control. One would desire some global result as: if $\omega$ satisfies the Geometric Control Condition, then the unique continuation (\ref{eqnuniq}) holds, but there is no such result for the moment so far. In fact, Miller \cite{Millerescape} gave some geometric examples of bounded open sets where the construction of pseudoconvex function for the wave operator (with boundary control) is impossible while Geometric Control Condition is fulfilled.

Some improvements on the geometric zone for unique continuation can be made by considering some weak Carleman estimate, where the function $\varphi$ fulfills only $Hess ~\varphi \geq 0$. For instance, in Mercado-Osses-Rosier \cite{CarlemanSchrodRosier} they prove unique continuation on rectangles where \\$\omega=\left\{x=(x_1,\ldots,x_n)\left| x_1\in [0,\varepsilon]\right. \right\}$ is a strip  (see also \cite{LaurentNLSdim3} by the author for the same result in some manifolds), by taking $\varphi(x)=x_1$.

Note that there exist some unique continuation results for partially analytic coefficients \cite{UniqZuilyRob}. This has been recently used by Joly and the author \cite{JolyLaurentNLW} for the stabilization of the nonlinear wave equation with only the Geometric Control Condition. An extension of this result for nonlinear Schrödinger equations would be very interesting. 

\appendix
\section{Appendix}
\bnp[Proof of Lemma \ref{lmpropageom}] We have to solve a transport equation with source term and with constraints of support.
Since $p$ is positive homogeneous of order $2$, there exists $q$ elliptic homogeneous of order $1$ such that $q^2=p$ (actually, the principal symbol of $q$ is $|\xi|_x$).\\
 We have $\left\{p,b\right\}=-\left\{b,q^2\right\}=- H_{b}q^2=- 2qH_{b}q= 2qH_{q}b$. Since $q$ is elliptic, we have to find $b$ and $r$ such that $H_{q}b= \frac{c}{2q} + \frac{r}{2q}$.

We are left with the following problem:\\
 Let $q$ real homogeneous elliptic of order $1$, $\tilde{c}$ homogeous of order $-1$, supported in a small conic neighborhood $V_0$ of $\rho_0$. We have to find $b$ and $\tilde{r}$ of order $-1$, with $\tilde{r}$ supported in a neighborhood $V_1$ as in the Lemma such that 
$$H_{q}b =\tilde{c}+\tilde{r}. $$
We would like to apply the homogeneous Darboux theorem (see Theorem 21.1.9 of \cite{HormanderIII}). So we have to guarantee that $H_{q}$ and $\xi\cdot\frac{\partial}{\partial \xi}$ are linearly independent.

Indeed, if we denote $G(x)=g^{ij}(x)$ as the matrix of the cofficients of $\sigma(\Delta)=\sum g^{ij}\xi_i \xi_j={ ^t\xi G\xi}$, we have $\partial_{\xi}\sigma(\Delta)= G\xi $ which is not zero if $\xi\neq 0$ ($G$ is invertible). So $H_{q}$ has a component along $\frac{\partial}{\partial x}$ and it is therefore independent of $\xi\cdot\frac{\partial}{\partial \xi}$.

Therefore, there exists a local symplectic homogeneous transformation, centered in $\rho_0$ $$\Phi(x,\xi)=(y_1(x,\xi),...,y_n(x,\xi),\eta_1(x,\xi),...,\eta_n(x,\xi))$$
with $y_i$ homogeneous in $\xi$ of order $0$, $\eta_i$ homogeneous in $\xi$ of order $1$, $\eta_1(x,\xi)=q(x,\xi)$ and $y(\rho_0)=0$.

From now on, the functions on $T^*M$ are defined with the new coordinates $(y_i,\eta_i)$. We have $H_{q}=H_{\eta_1}=\frac{\partial}{\partial y_1}$. Since $\rho_1=\Gamma(t_0)$, in the new coordinates, it gives $\rho_1=\rho_0+t_0 \frac{\partial}{\partial y_1}$.\\
We can choose $\varepsilon$ small enough and $V_1$ conical neighborhood of $\rho_1=\Gamma(t_0)$, $0<t_0<\varepsilon$, such that $\left\{V_1+t\frac{\partial}{\partial y_1}, t\in [-2\varepsilon,\varepsilon]\right\}$ is included in the domain of the chart $\Phi$ . Select $\varepsilon_1$ with $0<\varepsilon_1<t_0 /2$ and a conical open set $O\subset\R^{2d-1}$ such that $\rho_1+]-\varepsilon_1,\varepsilon_1[\times O \subset V_1$. We choose next $V_0=]-\varepsilon_1,\varepsilon_1[\times O$.

For a $\tilde{c}$ supported in $V_0$, we define: 
\begin{eqnarray}
\label{formuleprolong}\widetilde{b}(y_1,\cdots,y_n,\eta_1,\cdots,\eta_n)= \int_{-\infty}^{y_1}\tilde{c}(t,y_2,\cdots,y_n,\eta_1,\cdots,\eta_n)~dt
\end{eqnarray}
Then, $\widetilde{b}$ is supported in $\left\{V_0+t\frac{\partial}{\partial y_1};t\in [0,+\infty [\right\}$.\\
Let $\Psi \in C^{\infty}(\R)$ so that $\Psi(t)=1$ for $t\leq t_0-\varepsilon_1$ and $\Psi(t)=0$ for $t\geq t_0+\varepsilon_1$.\\
Set $b(y,\eta)=\Psi(y_1)\widetilde{b}(y,\eta)$, as it was already defined on the domain of the chart $\Phi$.\\
We compute
\begin{eqnarray*}
H_{q}b(y,\eta)=\frac{\partial}{\partial y_1}b=\Psi(y_1)\tilde{c}(y,\eta)+\Psi'(y_1)\widetilde{b}(y,\eta).
\end{eqnarray*}
Since $\Psi(y_1)=1$ on $y_1\leq t_0-\varepsilon_1$, particularly on $V_0$, we have 
$$\Psi(y_1)\tilde{c}(y,\eta)=\tilde{c}(y,\eta)$$
Moreover, $\tilde{r}:= \Psi'(y_1)\widetilde{b}$ is supported in 
$$\left\{t_0-\varepsilon_1\leq y_1\leq t_0+\varepsilon_1\right\}\cap \left\{V_0+t\frac{\partial}{\partial y_1};t\in [0,+\infty [\right\}= \rho_1+]-\varepsilon_1,\varepsilon_1[\times O \subset V_1.$$
\begin{figure}[!h]
$$\ecriture{\includegraphics[width=0.5\textwidth]{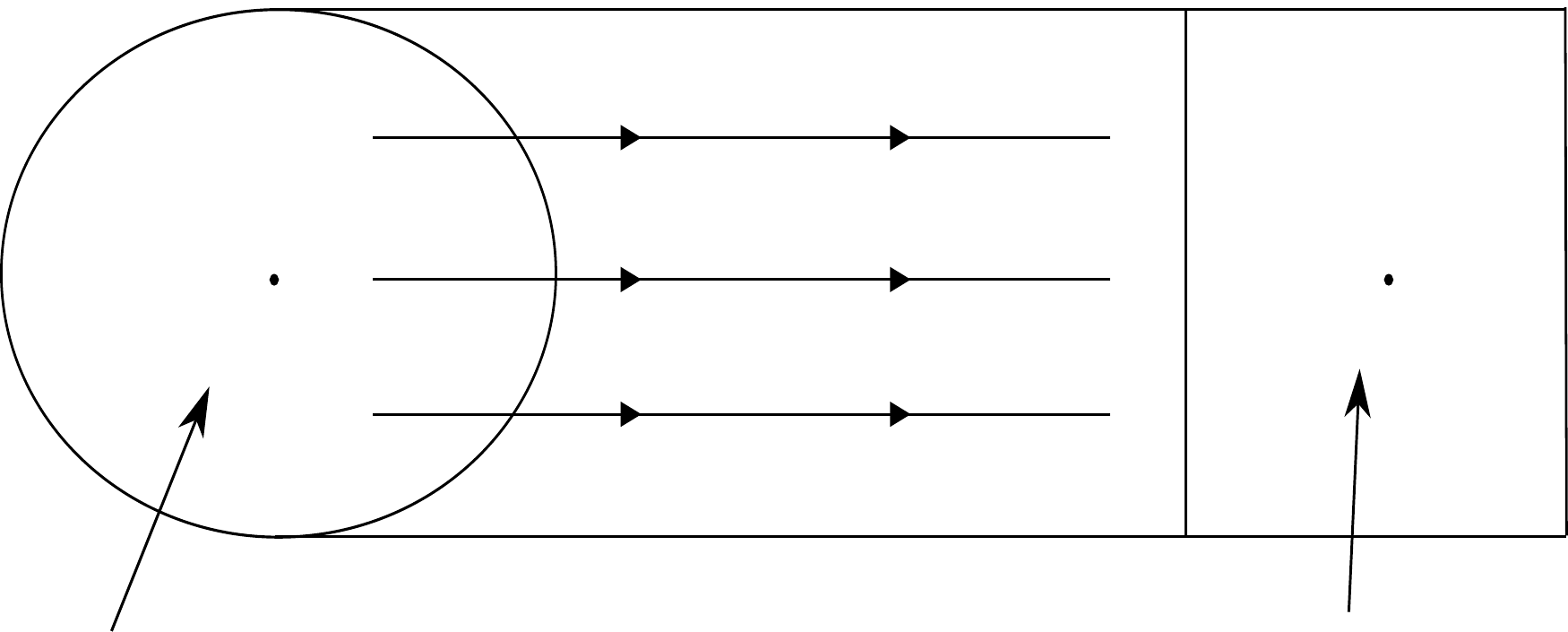}}
{\aat{0}{-1}{support of $\tilde{c}$}\aat{23}{11}{$H_q=\frac{\partial}{\partial y_1}$}\aat{9}{12}{$\rho_0$}\aat{43}{12}{$\rho_1$}\aat{37}{-1}{support of $\tilde{r}$}}$$ \caption{Propagation of information in phase space}\label{fig.propagGeom}
\end{figure}

Additionaly, if the symbol $\tilde{c}$ is homogeneous of order $-1$ in $\eta$, by the formula (\ref{formuleprolong}), $\tilde{r}$ and $\tilde{b}$ are also of order $-1$. By the homogeneous change of variable, it is also the case in the $\xi$ variable. We can check that all the symbols previously defined are compactly supported in the coordinate charts (up to dilation in the variable $\eta$) and they can be extended to $T^*M\setminus \left\{0\right\}$ in a smooth way.
\enp
For sake of completeness, we give a proof of the commutator estimate used in the proof of the Theorem \ref{ThmregHUM}, in the case of dimension 1. Note that this could be understood as a consequence that $D^r$ defined by (\ref{Dr}) is a pseudodifferential operator of order $r$.
\begin{lemme}
\label{lemmecommut}
Let $f$ denote the operator of multiplication by $f\in C^{\infty}(\Tu)$. Then, $[D^r,f]$ maps $H^s(\Tu)$ into $H^{s-r+1}(\Tu)$ for any $s,r\in \R$.
\end{lemme}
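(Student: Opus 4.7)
The plan is to argue entirely on the Fourier side. Writing $\sigma(n)=\mathrm{sgn}(n)|n|^r$ for $n\neq 0$ and $\sigma(0)=1$, one has the explicit identity
$$
\widehat{[D^r,f]u}(n)=\sum_{k\in\Z}\hat{f}(n-k)\bigl(\sigma(n)-\sigma(k)\bigr)\hat{u}(k).
$$
Thus the lemma reduces to showing that the kernel $K(n,k):=(1+|n|)^{s-r+1}(1+|k|)^{-s}\hat{f}(n-k)(\sigma(n)-\sigma(k))$ defines a bounded operator on $\ell^2(\Z)$. The strategy is to prove a pointwise bound of the form
$$
(1+|n|)^{s-r+1}|\sigma(n)-\sigma(k)|\le C_{s,r}(1+|n-k|)^{N}(1+|k|)^{s},
$$
for some $N=N(s,r)$, and then to absorb the factor $(1+|n-k|)^{N}$ using the rapid decay of $\hat{f}$.

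The key step is the kernel estimate above, which I would treat in three cases. When $n=0$ or $k=0$, the difference $|\sigma(n)-\sigma(k)|\lesssim 1+(\max(|n|,|k|))^r$ and $|n-k|=\max(|n|,|k|)$, so Peetre's inequality closes the bound. When $nk<0$, one has $|n-k|=|n|+|k|$ and $|\sigma(n)-\sigma(k)|=|n|^r+|k|^r\lesssim(1+|n-k|)^{\max(r,0)}$, and again Peetre converts $(1+|n|)^{s-r+1}$ into $(1+|k|)^{s-r+1}(1+|n-k|)^{|s-r+1|}$, yielding the claim after noticing that $1+|k|\le 1+|n-k|$. The substantive case is $nk>0$, where a mean-value inequality applied to $t\mapsto t^r$ on $[\,1,+\infty[$ gives
$$
|\sigma(n)-\sigma(k)|\le |r|\,|n-k|\,(1+\min(|n|,|k|))^{r-1}.
$$
Combining this with Peetre's inequality $(1+|n|)^{s-r+1}\le(1+|k|)^{s-r+1}(1+|n-k|)^{|s-r+1|}$ and the trivial bound $1+|n|\le(1+|k|)(1+|n-k|)$, one obtains the required estimate with $N:=|s-r+1|+|r-1|+1$.

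Once the kernel bound is established, the conclusion is routine. Since $f\in C^\infty(\Tu)$, for every $M$ we have $|\hat{f}(m)|\le C_M(1+|m|)^{-M}$, so choosing $M>N+1$ makes $h(m):=C_{s,r}(1+|m|)^{N}|\hat{f}(m)|$ an $\ell^1(\Z)$ sequence. The estimate on $\widehat{[D^r,f]u}(n)$ then reads
$$
(1+|n|)^{s-r+1}|\widehat{[D^r,f]u}(n)|\le\bigl(h\ast (1+|\cdot|)^{s}|\hat{u}(\cdot)|\bigr)(n),
$$
and Young's convolution inequality yields $\|[D^r,f]u\|_{H^{s-r+1}}\le\|h\|_{\ell^1}\|u\|_{H^s}$, which is the desired bound.

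The main obstacle is the kernel estimate in the same-sign case: one must handle carefully the sign of $r-1$ (where $t^{r-1}$ is increasing or decreasing) and the sign of $s-r+1$ (for the direction of Peetre's inequality). Everything else is bookkeeping with the rapid decay of $\hat{f}$.
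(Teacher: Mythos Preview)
Your approach is essentially the paper's: both work on the Fourier side, bound the symbol difference $\sigma(n)-\sigma(k)$ by an elementary mean-value-type inequality, shift Sobolev weights via Peetre's inequality, and close with a convolution estimate (you use Young's inequality; the paper uses Cauchy--Schwarz in a Schur-test fashion).

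One concrete slip to fix: the displayed bound
\[
|\sigma(n)-\sigma(k)|\le |r|\,|n-k|\,(1+\min(|n|,|k|))^{r-1}
\]
is false for $r>1$ (take $n=100$, $k=1$, $r=2$: the left side is $9999$, the right side $396$). You correctly flag the sign of $r-1$ as a case to watch, but as written the inequality only holds for $r\le 1$; for $r>1$ one needs $\max$ instead of $\min$, and then an extra application of $1+\max\le(1+\min)(1+|n-k|)$ to finish. The paper sidesteps this dichotomy by using the single symmetric bound
\[
|\sigma(n)-\sigma(k)|\le C\,|n-k|\bigl(|n|_\wr^{\,r-1}+|k|_\wr^{\,r-1}\bigr),
\]
valid for all $r\in\R$, which splits naturally into two terms each handled by one Peetre inequality. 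With that correction your argument goes through with the same exponent bookkeeping.
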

\begin{proof}Denote $\left| k\right|_{\wr}=\left|k\right|$ if $k\neq 0$ and $1$ otherwise. We also write $\sgn(0)=1$.
We have 
\bna
\widehat{D^r(fu)}(n)=\sgn(n)\left| n\right|_{\wr}^r\sum_k \widehat{f}(n-k)\widehat{u}(k)\\
\widehat{fD^ru}(n)=\sum_k \widehat{f}(n-k)\sgn(k)\left| k\right|_{\wr}^r\widehat{u}(k).
\ena
And then
\bna
\widehat{[D_r,f]u}(n)&=&\sum_k \widehat{f}(n-k)(\sgn(n)\left| n\right|_{\wr}^r-\sgn(k)\left| k\right|_{\wr}^r)\widehat{u}(k)\\
\left|\widehat{[D_r,f]u}(n)\right|&\leq & C\sum_k |\widehat{f}(n-k)||n-k|(\left| n\right|_{\wr}^{r-1}+\left| k\right|_{\wr}^{r-1})|\widehat{u}(k)|.
\ena
Using $\left| n\right|_{\wr}^{2\rho} \leq C\left| n-k\right|_{\wr}^{2|\rho|}\left| k\right|_{\wr}^{2\rho}$ for any $\rho \in \R$, we get
\bnan
\left\|[D_r,f]u\right\|^2_{H^{s-r+1}} & \leq & C\sum_{n}\left| n\right|_{\wr}^{2s}\left(\sum_{k} \left|\widehat{f}(n-k)(n-k)\right||\widehat{u}(k)|\right)^2\nonumber\\
&+&C\sum_{n}\left(\sum_{k}\left| n-k\right|_{\wr}^{|s-r+1|}\left| k\right|_{\wr}^{s} \left|\widehat{f}(n-k)(n-k)\right||\widehat{u}(k)|\right)^2\nonumber\\
&\leq &C\sum_{n}\left(\sum_{k}\left| n-k\right|_{\wr}^{|s|}\left| k\right|_{\wr}^{s} \left|\widehat{f}(n-k)(n-k)\right||\widehat{u}(k)|\right)^2 \label{terme1}\\
&+&C\sum_{n}\left(\sum_{k}\left|n-k\right|_{\wr}^{|s-r+1|}\left|k\right|_{\wr}^{s} \left|\widehat{f}(n-k)(n-k)\right||\widehat{u}(k)|\right)^2\label{terme2}.
\enan
We estimate (\ref{terme1}) using Cauchy-Schwarz inequality, and as well for (\ref{terme2}).
\bna
(\ref{terme1})&\leq &C\sum_{n}\left(\sum_{k}\left| n-k\right|_{\wr}^{|s|}|\widehat{f}(n-k)(n-k)|\right) \times\\
& &\left(\sum_{k}\left|n-k\right|_{\wr}^{|s|}|\widehat{f}(n-k)(n-k)|\left| k\right|_{\wr}^{2s} |\widehat{u}(k)|^2\right)\\
&\leq &C\left(\sum_{k}\left| k\right|_{\wr}^{|s|}|k\widehat{f}(k)|\right)^2\left(\sum_{k}\left| k\right|_{\wr}^{2s} |\widehat{u}(k)|^2\right)\\
&\leq & C_{f} \nor{u}{H^s}^2.
\ena \end{proof}

\vspace{10mm}

\noindent {\bf Acknowledgements:} I would like to thank the organizers of the summer school PASI-CIPPDE in Santiago de Chile for proposing me to give that lecture. I also thank Ivonne Rivas and Matthieu Léautaud for useful comments about the preliminary versions of these notes. The author was partially supported by the "Agence Nationale de la
Recherche" (ANR), Projet Blanc EMAQS number ANR-2011-BS01-017-01.
\bibliographystyle{plain} 
\bibliography{biblio}
\end{document}